\newcommand{\w}{\omega}
\newcommand{\F}{\mathcal F}
\newcommand{\Ra}{\Rightarrow}
\newcommand{\I}{\mathcal I}
\newcommand{\A}{\mathcal A}
\newcommand{\C}{\mathcal C}
\newcommand{\K}{\mathcal K}
\newcommand{\PP}{\mathcal P}
\newcommand{\cbox}{\boxdot}
\newtheorem{theorem}{Theorem}[section]
\newtheorem{corollary}[theorem]{Corollary}
\newtheorem{lemma}[theorem]{Lemma}
\newtheorem{example}[theorem]{Example}
\newtheorem{proposition}[theorem]{Proposition}
\theoremstyle{definition}
\newtheorem{definition}[theorem]{Definition}
\title[The strong Pytkeev$^*$ property]{The strong Pytkeev$^*$ property of topological spaces}
\author{Taras Banakh}
\address{Ivan Franko National University of Lviv (Ukraine) and Institute of Mathematics, Jan Kochanowski University in Lviv (Ukraine)}
\email{t.o.banakh@gmail.com}
\subjclass{54D70, 54E18}
\keywords{Pytkeev network, the strong Pytkeev property, local $\w^\w$-base, $k$-network}
\begin{document}

\begin{abstract}  Modifying the known definition of a Pytkeev network, we introduce  a notion of  Pytkeev$^*$ network and prove that a topological space has a countable Pytkeev network if and only if $X$ is countably tight and has a countable Pykeev$^*$ network at $x$. In the paper we establish some stability properties of the class of topological spaces with the strong Pytkeev$^*$-property.
\end{abstract}
\maketitle

\section{Introduction}

In this paper we introduce and study the strong Pytkeev$^*$ property, which is a countable modification of the strong Pytkeev property. The latter property was introduced in \cite{TZ} and studied in \cite{BL}, \cite{GartMor}. Our interest to the strong Pytkeev$^*$ property is motivated by a recent result of the first author who proved in \cite{Ban} that each topological space with a local $\w^\w$-base has the strong Pytkeev$^*$-property.
A topological space $X$ is defined to have a {\em local $\w^\w$-base} if each point $x\in X$  has a neighborhood base $(U_\alpha)_{\alpha\in\w^\w}$ such that $U_\beta\subset U_\alpha$ for all $\alpha\le\beta$ in $\w^\w$. Here the set  $\w^\w$ of all functions from $\w$ to $\w$ is endowed with the partial order $\le $ defined by $\alpha\le\beta$ iff $\alpha(n)\le\beta(n)$ for all $n\in\w$. By $\w$ we denote the smallest infinite cardinal.


By definition, a topological space $X$ has the strong Pytkeev$^*$ property  if $X$ has a countable Pytkeev$^*$ network at each point $x\in X$. Pytkeev$^*$ networks are introduced and studied in Section~\ref{s:P*}. Pytkeev$^*$ networks are modifications of $\w$-Pytkeev networks which are particular cases of $\kappa$-Pytkeev networks, considered in Section~\ref{s:kP}.

\section{Pytkeev networks versus $\kappa$-Pytkeev networks}\label{s:kP}

In this section we recall the definition of a Pytkeev network and inserting a cardinal parameter $\kappa$ in this definition introduce $\kappa$-Pytkeev network. Then we study the interplay between these two notions.

For a subset $A$ of a topological space $X$ by $\bar A$ we denote the closure of $A$ in $X$. We shall say that a subset $A$ of a topological space $X$ {\em accumulates} at a point $x\in X$ is each neighborhood $O_x\subset X$ of $x$ contains infinitely many points of the set $A$.

\begin{definition}\label{d:Pytkeev} A family $\mathcal N$ of subsets of a topological space $X$ is called
\begin{itemize}
\item a {\em network} if for each point $x\in X$ and neighborhood $O_x\subset X$ there exists a set $N\in\mathcal N$ such that $x\in N\subset O_x$;
\item a {\em Pytkeev network} at a point $x\in X$ if for each neighborhood $O_x\subset X$ of $x$ and each subset $A\subset X$ with $x\in \bar A$ there is a set $N\in\mathcal N$ such that $x\in N\subset X$, $N\cap A\ne\emptyset$, and moreover $N\cap A$ is infinite if the set $A$ accumulates at $x$;
\item a {\em Pytkeev network} if $\mathcal N$ is a Pytkeev network at each point $x\in X$.
\end{itemize}
\end{definition}
Pytkeev networks were introduced in \cite{Ban} and thoroughly studied in \cite{Ban}, \cite{BL}, \cite{GK15b}.

Inserting a restriction on the cardinality of the sets $A$ in Definition~\ref{d:Pytkeev}, we obtain the definition of a $\kappa$-Pytkeev network.

\begin{definition}\label{d:Pytkeev} Let $\kappa$ be a cardinal. A family $\mathcal N$ of subsets of a topological space $X$ is called
\begin{itemize}
\item a {\em $\kappa$-Pytkeev network} at a point $x\in X$ if for each neighborhood $O_x\subset X$ of $x$ and each subset $A\subset X$ of cardinality $|A|\le\kappa$ with $x\in \bar A$ there is a set $N\in\mathcal N$ such that $x\in N\subset X$, $N\cap A\ne\emptyset$, and moreover $N\cap A$ is infinite if the set $A$ accumulates at $x$;
\item a {\em $\kappa$-Pytkeev network} if $\mathcal N$ is a $\kappa$-Pytkeev network at each point $x\in X$.
\end{itemize}
\end{definition}

The {\em tightness} $t_x(X)$ of a topological space $X$ at a point $x\in X$ is defined as the smallest infinite cardinal $\kappa$ such that each subset $A\subset X$ with $x\in\bar A$ contains a subset $B\subset A$ of cardinality $|B|\le\kappa$ with $x\in\bar B$.

\begin{proposition}\label{p:P<=>kP+t} For a family $\mathcal N$ of subsets of a topological space $X$ and a point $x\in X$ the following conditions are equivalent:
\begin{enumerate}
\item $\mathcal N$ is a Pytkeev network at $x$;
\item $\mathcal N$ is a $\kappa$-Pytkeev network at $x$ for every cardinal $\kappa$;
\item $\mathcal N$ is a $t_x(X)$-Pytkeev network at $x$.
\end{enumerate}
\end{proposition}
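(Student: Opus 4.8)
The plan is to prove the cyclic implications $(1)\Ra(2)\Ra(3)\Ra(1)$. The first two are immediate from the definitions: a Pytkeev network at $x$ by definition copes with \emph{all} sets $A\subset X$ having $x\in\bar A$, in particular with those of cardinality $\le\kappa$, so it is a $\kappa$-Pytkeev network at $x$ for every $\kappa$; and $(3)$ is simply the instance $\kappa=t_x(X)$ of $(2)$.

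The content of the proposition is $(3)\Ra(1)$. Assuming $\mathcal N$ is a $t_x(X)$-Pytkeev network at $x$, I fix a neighborhood $O_x$ of $x$ and a set $A\subset X$ with $x\in\bar A$, and look for $N\in\mathcal N$ with $x\in N\subset O_x$, $N\cap A\ne\emptyset$, and $N\cap A$ infinite when $A$ accumulates at $x$. The strategy is to pass to a subset $B\subset A$ with $|B|\le t_x(X)$ and $x\in\bar B$ that moreover accumulates at $x$ whenever $A$ does; then condition $(3)$ applied to the pair $(O_x,B)$ delivers $N\in\mathcal N$ with $x\in N\subset O_x$ and $\emptyset\ne N\cap B\subset N\cap A$, with $N\cap B$ (hence $N\cap A$) infinite as soon as $B$ accumulates at $x$.

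To build $B$ I would distinguish two cases. If $A$ does not accumulate at $x$, the definition of tightness directly yields $B\subset A$ with $|B|\le t_x(X)$ and $x\in\bar B$, and nothing more is required. If $A$ accumulates at $x$, I consider $A_0:=\{a\in A: x\in\overline{\{a\}}\}$, the set of points of $A$ that lie in every neighborhood of $x$. If $A_0$ is infinite, any countably infinite $B\subset A_0$ works, for then every neighborhood of $x$ contains $B$, so $B$ accumulates at $x$ and $|B|=\w\le t_x(X)$. If $A_0$ is finite, then $A\setminus A_0$ still accumulates at $x$, so $x\in\overline{A\setminus A_0}$ and tightness provides $B\subset A\setminus A_0$ with $|B|\le t_x(X)$ and $x\in\bar B$; such a $B$ must accumulate at $x$, since otherwise some neighborhood $U$ of $x$ contains only finitely many points of $B$, and since $x\in\bar B$ there is at least one such point; then at least one of these finitely many points must lie in every neighborhood of $x$ (otherwise $U$ could be shrunk to a neighborhood of $x$ disjoint from $B$), but that point would belong to $A_0$, contradicting $B\subset A\setminus A_0$.

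I expect the genuine obstacle to be precisely this last step: upgrading "$x\in\bar B$", which is all that tightness provides, to "$B$ accumulates at $x$". The verbatim upgrade can fail in non-$T_1$ spaces, where $\overline{\{a\}}$ may be large and a single point can drag $x$ into the closure of a finite set; the dichotomy according to whether $A_0$ is finite is exactly what separates that pathology from the generic situation.
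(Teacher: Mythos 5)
Your proof is correct and follows essentially the same route as the paper: the same case split into non-accumulating $A$, accumulating $A$ with $A\cap\ddot x$ infinite, and accumulating $A$ with $A\cap\ddot x$ finite (your $A_0$ is exactly $A\cap\ddot x$), with the same final upgrade from $x\in\bar B$ to accumulation via the observation that $B\cap\ddot x=\emptyset$. You even spell out the one step the paper leaves implicit, and your handling of the non-accumulating case (invoking tightness directly rather than extracting a singleton from $\ddot x\cap A$) is a harmless simplification.
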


\begin{proof} The implications $(1)\Leftrightarrow(2)\Ra(3)$ are trivial. To prove that $(3)\Ra(1)$, assume that $\mathcal N$ is a $t_x(X)$-Pytkeev network at $x$. Given any neighborhood $O_x\subset X$ of $x$ and a subset $A\subset X$ with $x\in\bar A$, we need to find a set $N\in\mathcal N$ such that $x\in N\subset O_x$, $N\cap A\ne\emptyset$ and $N\cap A$ is infinite if $A$ accumulates at $x$. If the set $A$ does not accumulate at $x$, then there exists a neighborhood $V_x\subset O_x$ such that $V_x\cap A$ is finite. Replacing $V_x$ by a smaller neighborhood we can assume that $V_x\cap A=\ddot x\cap A$ where $\ddot x$ is the intersection of all neighborhoods of $x$ in $X$. Since $x\in \bar A$, the intersection $\ddot x\cap A=V_x\cap A$ is not empty, which allows us to find a singleton $B\subset \ddot x\cap A$ and observe that $x\in\bar B$. Since $|B|=1\le t_x(X)$ and $\mathcal N$ is a $t_x(X)$-Pytkeev network at $x$, there exists a set $N\in\mathcal N$ such that $x\in N\subset O_x$ and $\emptyset\ne N\cap B\subset N\cap A$.

Now assume that the set $A$ accumulates at $x$. If $A\cap\ddot x$ is infinite, then choose any countable infinite set $B\subset A\cap\ddot x$ and observe that $B$ accumulates at $x$. Since $|B|=\w\le t_x(X)$ the $t_x(X)$-Pytkeev network $\mathcal N$ contains a set $N$ such that $x\in N\subset O_x$ and $N\cap B\subset N\cap A$ is infinite. If $\ddot x$ is finite, then the set $A\setminus\ddot x$ accumulates at $x$. By the definition of the cardinal $t_x(X)$, there exists a subset $B\subset A\setminus \ddot x$ of cardinality $|B|\le t_x(X)$ such that $x\in\bar B$. Since $B\cap\ddot x=\emptyset$, the set $B$ accumulates at $x$. Then the $t_x(X)$-Pytkeev network $\mathcal N$ contains a set $N$ such that $x\in N\subset O_x$ and $N\cap B\subset N\cap A$ is infinite.
\end{proof}

In fact, the tightness $t_x(X)$ of a topological space $X$ at a point $x$ is upper bounded by the cardinality of a Pytkeev network at $x$.

\begin{lemma}\label{l:P=>t} If $\mathcal N$ is a Pytkeev network at a point $x$ of a topological space $X$, then $t_x(X)\le\w\cdot|\mathcal N|$.
\end{lemma}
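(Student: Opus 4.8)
The plan is to let $\kappa=\w\cdot|\mathcal N|$ and verify directly that every subset $A\subset X$ with $x\in\bar A$ contains a subset $B\subset A$ with $|B|\le\kappa$ and $x\in\bar B$; by the definition of tightness this will give $t_x(X)\le\kappa$. The factor $\w$ only plays a role when $\mathcal N$ happens to be finite, where it is needed to respect the convention that $t_x(X)$ is an infinite cardinal.

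So fix $A\subset X$ with $x\in\bar A$. First I would isolate the relevant part of the network, namely the subfamily $\mathcal N_{x,A}=\{N\in\mathcal N:\ x\in N \text{ and } N\cap A\ne\emptyset\}$, and, using the axiom of choice, select for every $N\in\mathcal N_{x,A}$ a single point $a_N\in N\cap A$. Put $B=\{a_N:N\in\mathcal N_{x,A}\}$. Then $B\subset A$ and $|B|\le|\mathcal N_{x,A}|\le|\mathcal N|\le\kappa$.

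Next I would check that $x\in\bar B$. Take an arbitrary neighborhood $O_x\subset X$ of $x$. Since $x\in\bar A$, the defining property of a Pytkeev network at $x$ (applied to $O_x$ and $A$) produces a set $N\in\mathcal N$ with $x\in N\subset O_x$ and $N\cap A\ne\emptyset$; thus $N\in\mathcal N_{x,A}$, and the chosen point satisfies $a_N\in N\cap A\subset O_x$ together with $a_N\in B$. Hence $O_x\cap B\ne\emptyset$, and as $O_x$ was arbitrary, $x\in\bar B$.

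Since such a $B$ of cardinality $\le\kappa$ exists for every $A$ with $x\in\bar A$, the definition of tightness yields $t_x(X)\le\kappa=\w\cdot|\mathcal N|$, completing the proof. There is no genuine obstacle here: the argument uses only the weakest clause of the Pytkeev network property (existence of $N$ with $x\in N\subset O_x$ and $N\cap A\ne\emptyset$), and the one subtlety worth flagging explicitly is that a single representative point per member of $\mathcal N_{x,A}$ already suffices, so no countable sets $B_N$ are needed.
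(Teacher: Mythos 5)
Your proposal is correct and follows essentially the same argument as the paper: restrict to the subfamily of network elements meeting $A$, pick one representative point from each, and use the Pytkeev network property to show $x$ lies in the closure of the resulting set $B$. Your explicit remark about the role of the factor $\w$ for finite $\mathcal N$ is a small clarification the paper leaves implicit.
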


 \begin{proof}  Given any subset  $A\subset X$ with $x\in\bar A$, we should will find a subset $B\subset A$ of cardinality $|B|\le |\mathcal N|$ such that $x\in\bar B$. Consider the subfamily $\mathcal N'=\{N\in\mathcal N:N\cap A\ne\emptyset\}$ and for every $N\in\mathcal N'$ choose a point $x_N\in A\cap N$. It is clear that the set $B=\{x_N:N\in\mathcal N'\}$ has cardinality $|B|\le|\mathcal N|$. It remains to prove that $x\in\bar B$. By Definition~\ref{d:Pytkeev}, for any neighborhood $O_x\subset X$ of $x$ we can find a set $N\in\mathcal N$ such that
$x\in N\subset O_x$ and $N\cap A\ne\emptyset$. Then $N\in\mathcal N'$ and hence $x_N\in B\cap O_x$; so $B\cap O_x$ is not empty and hence $x\in\bar B$.
\end{proof}

A topological space $X$ is {\em countable tight} at a point $x\in X$ if $t_x(X)=\w$. A space $X$ is {\em countably tight} if it is countably tight at each point.
Proposition~\ref{p:P<=>kP+t} implies:

\begin{corollary}\label{c:ct+P=>wP} A family $\mathcal N$ of subsets of a countably tight space $X$ is a Pytkeev network at a point $x\in X$ if and only if $\mathcal N$ is an $\w$-Pytkeev network at $x$.
\end{corollary}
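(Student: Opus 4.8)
The plan is to derive this immediately from Proposition~\ref{p:P<=>kP+t}. The only input we need is that countable tightness of $X$ means, by the definition of $t_x(X)$ recalled just above, that $t_x(X)=\w$ at the given point $x$. Consequently a ``$t_x(X)$-Pytkeev network at $x$'' is literally the same object as an ``$\w$-Pytkeev network at $x$'', and the equivalence $(1)\Leftrightarrow(3)$ of Proposition~\ref{p:P<=>kP+t} says precisely that $\mathcal N$ is a Pytkeev network at $x$ if and only if it is a $t_x(X)$-Pytkeev network at $x$. Substituting $t_x(X)=\w$ yields both implications of the corollary at once.

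Concretely I would write: ``By the definition of tightness, countable tightness of $X$ gives $t_x(X)=\w$ for the point $x\in X$. Hence $\mathcal N$ is an $\w$-Pytkeev network at $x$ if and only if $\mathcal N$ is a $t_x(X)$-Pytkeev network at $x$, which by the equivalence $(1)\Leftrightarrow(3)$ of Proposition~\ref{p:P<=>kP+t} happens if and only if $\mathcal N$ is a Pytkeev network at $x$.'' That is the whole proof.

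There is essentially no obstacle here: all the substantive work — in particular the case analysis according to whether $A$ accumulates at $x$ and whether the intersection $\ddot x$ of all neighbourhoods of $x$ is finite or infinite — has already been carried out inside the proof of Proposition~\ref{p:P<=>kP+t}. The one thing worth a sentence of care is to make explicit that ``countably tight (at $x$)'' is being used exactly in the form $t_x(X)=\w$, so that the parameter $\kappa=\w$ in ``$\w$-Pytkeev network'' coincides with $t_x(X)$ and Proposition~\ref{p:P<=>kP+t}(3) applies verbatim. If a self-contained argument were desired one could instead re-run the proof of $(3)\Ra(1)$ with $\kappa=\w$, but this would merely reproduce the same reasoning, so invoking the proposition is the cleaner route.
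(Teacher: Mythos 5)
Your proposal is correct and matches the paper exactly: the paper states this corollary as an immediate consequence of Proposition~\ref{p:P<=>kP+t}, using precisely the observation that countable tightness gives $t_x(X)=\w$ so that condition (3) of that proposition becomes the $\w$-Pytkeev condition. No further comment is needed.
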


Combining Corollary~\ref{c:ct+P=>wP} with Lemma~\ref{l:P=>t}, we get another corollary.

\begin{corollary}\label{c:P<=>wP} A countable family $\mathcal N$ of subsets of a topological space $X$ is a Pytkeev network at a point $x\in X$ if and only if the space $X$ is countably tight at $x$ and $\mathcal N$ is an $\w$-Pytkeev network at $x$.
\end{corollary}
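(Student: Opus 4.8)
The plan is to derive the statement formally from Lemma~\ref{l:P=>t} and Proposition~\ref{p:P<=>kP+t}, treating the two implications separately.

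For the ``only if'' part, I would assume that $\mathcal N$ is a countable Pytkeev network at $x$. Countable tightness of $X$ at $x$ is then immediate from Lemma~\ref{l:P=>t}: since $|\mathcal N|\le\w$, we get $t_x(X)\le\w\cdot|\mathcal N|=\w$, hence $t_x(X)=\w$. Moreover, every Pytkeev network at $x$ is an $\w$-Pytkeev network at $x$ (this is the trivial part of Proposition~\ref{p:P<=>kP+t}, namely $(1)\Ra(3)$ specialized to $\kappa=\w$, or simply the observation that restricting the subsets $A$ in Definition~\ref{d:Pytkeev} to those with $|A|\le\w$ only weakens the requirement), so $\mathcal N$ is an $\w$-Pytkeev network at $x$.

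For the ``if'' part, assume $t_x(X)=\w$ and that $\mathcal N$ is an $\w$-Pytkeev network at $x$. Then $\mathcal N$ is a $t_x(X)$-Pytkeev network at $x$, so condition~(3) of Proposition~\ref{p:P<=>kP+t} is satisfied, and the implication $(3)\Ra(1)$ of that proposition gives that $\mathcal N$ is a Pytkeev network at $x$; note that the countability of $\mathcal N$ is not used in this direction. Equivalently, one may quote Corollary~\ref{c:ct+P=>wP}.

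Beyond this, there is no genuine obstacle: the corollary is a formal consequence of the two cited results. The one point deserving a word of care is that Corollary~\ref{c:ct+P=>wP} is phrased for spaces that are countably tight at every point, whereas here countable tightness is assumed only at the single point $x$. This causes no difficulty, since the underlying Proposition~\ref{p:P<=>kP+t} (and its proof) is entirely local at $x$; so in writing up the argument it is cleanest to invoke Proposition~\ref{p:P<=>kP+t} directly rather than the global corollary.
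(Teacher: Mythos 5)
Your proposal is correct and follows essentially the same route as the paper, which obtains this corollary by combining Lemma~\ref{l:P=>t} (for the countable tightness) with Corollary~\ref{c:ct+P=>wP} (equivalently, the implication $(3)\Rightarrow(1)$ of Proposition~\ref{p:P<=>kP+t} with $\kappa=\w$). Your remark that the argument is purely local at $x$, so that one may cite Proposition~\ref{p:P<=>kP+t} directly rather than the globally phrased Corollary~\ref{c:ct+P=>wP}, is a valid and slightly more careful reading of the same proof.
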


A family $\mathcal N$ of subsets of a topological space $X$ is called {\em point-countable} if for each $x\in X$ the family $\mathcal N_x=\{N\in\mathcal N:x\in N\}$ is at most countable. Corollary~\ref{c:P<=>wP} implies:

\begin{corollary}\label{c:pcP<=>wP} A point-countable family $\mathcal N$ of subsets of a topological space is a Pytkeev network if and only if $X$ is countably tight and $\mathcal N$ is an $\w$-Pytkeev network.
\end{corollary}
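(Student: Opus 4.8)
The plan is to derive Corollary~\ref{c:pcP<=>wP} from Corollary~\ref{c:P<=>wP} by localizing at each point. The statement to prove is the equivalence: a point-countable family $\mathcal N$ is a Pytkeev network iff $X$ is countably tight and $\mathcal N$ is an $\w$-Pytkeev network. The key observation is that for a point-countable family, the subfamily $\mathcal N_x=\{N\in\mathcal N:x\in N\}$ is countable, and whether $\mathcal N$ is a Pytkeev network (resp. $\w$-Pytkeev network) \emph{at} $x$ depends only on $\mathcal N_x$: indeed, the defining conditions in Definition~\ref{d:Pytkeev} require $x\in N$, so sets not containing $x$ are irrelevant at $x$.

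First I would fix a point $x\in X$ and set $\mathcal N_x=\{N\in\mathcal N:x\in N\}$, which is countable by point-countability. I would observe that $\mathcal N$ is a Pytkeev network at $x$ if and only if $\mathcal N_x$ is a Pytkeev network at $x$, and likewise $\mathcal N$ is an $\w$-Pytkeev network at $x$ if and only if $\mathcal N_x$ is an $\w$-Pytkeev network at $x$ (in both directions the witnessing set $N$ contains $x$, hence lies in $\mathcal N_x$; conversely $\mathcal N_x\subset\mathcal N$). Then I would apply Corollary~\ref{c:P<=>wP} to the \emph{countable} family $\mathcal N_x$: it is a Pytkeev network at $x$ iff $X$ is countably tight at $x$ and $\mathcal N_x$ is an $\w$-Pytkeev network at $x$.

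Putting these together: $\mathcal N$ is a Pytkeev network (i.e.\ at every point $x$) iff for every $x$, $\mathcal N_x$ is a Pytkeev network at $x$, iff for every $x$, $X$ is countably tight at $x$ and $\mathcal N_x$ is an $\w$-Pytkeev network at $x$, iff $X$ is countably tight and $\mathcal N$ is an $\w$-Pytkeev network. This is a short chain of equivalences with no real computation. I do not expect any serious obstacle here; the only point requiring a word of care is the first reduction step — that passing from $\mathcal N$ to $\mathcal N_x$ preserves the relevant network properties at $x$ — but this is immediate from the requirement $x\in N$ in Definition~\ref{d:Pytkeev}, so the proof can simply state it and invoke Corollary~\ref{c:P<=>wP}.
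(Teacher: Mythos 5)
Your proposal is correct and is exactly the argument the paper intends: the paper derives this corollary directly from Corollary~\ref{c:P<=>wP} by the same pointwise localization, using that the witnessing set $N$ must contain $x$ and hence lies in the countable subfamily $\mathcal N_x$. No gaps.
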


\section{Pytkeev$^*$-networks versus $\w$-Pytkeev networks}\label{s:P*}

The definition of a $\kappa$-Pytkeev network is a bit complicated as it actually contains two requirements (for non-accumulating and accumulating sets $A$). In $T_1$-spaces $\w$-Pytkeev networks can  be equivalently defined using accumulating sequences in place of countable accumulating sets. This yields a more simple and a more natural notion of a Pytkeev$^*$ network.

We say that a sequence $(x_n)_{n\in\w}$ of points of a topological space $X$ {\em accumulates} at a point $x\in X$ if each neighborhood $O_x\subset X$ of $x$ contains  infinitely many points $x_n$, $n\in\w$.

\begin{definition}\label{d:Pytkeev*} A  family $\mathcal N$ of subsets of a topological space $X$ is called
\begin{itemize}
\item a {\em Pytkeev$^*$ network} at a point $x\in X$ if for each neighborhood $O_x\subset X$ of $x$ and each sequence $(x_n)_{n\in\w}\in X^\w$ accumulating at $x$ there is a set $N\in\mathcal N$ such that $x\in N\subset O_x$ and $N$ contains infinitely many points $x_n$, $n\in\w$.
\item a {\em Pytkeev$^*$ network} if $\mathcal N$ is a Pytkeev$^*$ network at each point $x\in X$.
\end{itemize}
\end{definition}

Pytkeev networks can be characterized using accumulating sequences of finite sets in place of accumulating sequences of points. We shall say that a sequence $(F_n)_{n\in\w}$ of subsets of a topological space $X$ {\em accumulates} at a point $x\in X$ if each neighborhood $O_x\subset X$ of $x$ intersects infinitely many sets $F_n$, $n\in\w$.

 \begin{proposition}\label{p:P*p<=>P*f} A family $\mathcal N$ of subsets of a topological space $X$ is a Pytkeev$^*$ network at a point $x\in X$ if and only if   for each neighborhood $O_x\subset X$ of $x$ and each sequence $(F_n)_{n\in\w}$ of finite subsets of $X$ accumulating at $x$ there is a set $N\in\mathcal N$ such that $x\in N\subset O_x$ and $N$ intersects infinitely many sets $F_n$, $n\in\w$.
\end{proposition}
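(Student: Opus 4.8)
The plan is to prove both implications, the nontrivial one being that a Pytkeev$^*$ network at $x$ automatically satisfies the stated finite-set condition.

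First I would dispose of the "if" direction. Assume $\mathcal N$ has the property that for every neighborhood $O_x$ and every sequence of finite subsets $(F_n)_{n\in\w}$ accumulating at $x$ there is $N\in\mathcal N$ with $x\in N\subset O_x$ meeting infinitely many $F_n$. Given a neighborhood $O_x$ and a sequence $(x_n)_{n\in\w}\in X^\w$ accumulating at $x$, apply the hypothesis to the sequence of singletons $F_n=\{x_n\}$, which accumulates at $x$ (a neighborhood of $x$ intersects $F_n$ iff it contains $x_n$). This yields $N\in\mathcal N$ with $x\in N\subset O_x$ intersecting infinitely many of the $F_n$, i.e.\ containing infinitely many of the points $x_n$. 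Hence $\mathcal N$ is a Pytkeev$^*$ network at $x$.

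For the "only if" direction, assume $\mathcal N$ is a Pytkeev$^*$ network at $x$, fix a neighborhood $O_x\subset X$ of $x$ and a sequence $(F_n)_{n\in\w}$ of finite subsets of $X$ accumulating at $x$. Since every neighborhood of $x$ meets infinitely many of the $F_n$, infinitely many $F_n$ are nonempty; discarding the empty ones and reindexing, I may assume each $F_n$ is nonempty. Fix enumerations $F_n=\{x_{n,1},\dots,x_{n,m_n}\}$ and let $(y_k)_{k\in\w}$ be the sequence obtained by concatenating these blocks in order, that is, listing $x_{0,1},\dots,x_{0,m_0},x_{1,1},\dots,x_{1,m_1},x_{2,1},\dots$; this is a genuine element of $X^\w$ because infinitely many blocks are nonempty. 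Then I would check that $(y_k)_{k\in\w}$ accumulates at $x$: any neighborhood $U_x\subset X$ of $x$ meets $F_n$ for infinitely many $n$, and each such $F_n$ contributes at least one term $y_k\in U_x$, so $U_x$ contains infinitely many of the $y_k$.

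Applying the Pytkeev$^*$ property of $\mathcal N$ at $x$ to the neighborhood $O_x$ and the accumulating sequence $(y_k)_{k\in\w}$, I obtain a set $N\in\mathcal N$ with $x\in N\subset O_x$ that contains infinitely many terms $y_k$. Since each block $F_n$ is finite, it contributes only finitely many terms to $(y_k)$, so the set of indices $n$ with $N\cap F_n\ne\emptyset$ must be infinite; thus $N$ intersects infinitely many $F_n$, which is what we wanted. The whole argument is essentially bookkeeping: the only places requiring a little care are the reduction to nonempty $F_n$ (so that the concatenated sequence is actually infinite) and the observation that the finiteness of the sets $F_n$ is precisely what converts "$N$ contains infinitely many terms $y_k$" into "$N$ meets infinitely many $F_n$". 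I do not expect a genuine obstacle.
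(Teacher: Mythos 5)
Your proof is correct and follows essentially the same route as the paper: the ``if'' direction via singletons, and the ``only if'' direction by concatenating the finite sets $F_n$ into a single accumulating sequence and using finiteness of the blocks to convert ``infinitely many terms'' into ``infinitely many blocks met.'' Your explicit handling of possibly empty $F_n$ is a minor point the paper leaves implicit, but it changes nothing substantive.
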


\begin{proof} The ``if'' part is trivial. To prove the ``only if'' part, assume that $\mathcal N$ is a Pytkeev$^*$ network at $x$. Take any neighborhood $O_x\subset X$ of $x$ and any sequence $(F_n)_{n\in\w}$ of finite subsets of $X$ accumulating at $x$. Let $n_k=0$ and $n_{k+1}=n_k+|F_k|$ for $k\in\w$. Let $\{x_n\}_{n\in\w}$ be an enumeration of the union $\bigcup_{k\in\w}F_k$ such that $F_k=\{x_i:n_k\le i<n_{k+1}\}$ for all $k\in\w$. It is easy to see that the sequence $(x_n)_{n\in \w}$ accumulates at $x$. Since $\mathcal N$ is a Pytkeev$^*$ network, there exists a set $N\in\mathcal N$ such that $x\in N\subset O_x$ and $N$ contains infinitely many points $x_n$, $n\in\w$. Then $N$ intersects infinitely many sets $F_n$, $n\in\w$.
\end{proof}

Pytkeev$^*$ networks are tightly related to $\w$-Pytkeev networks.

\begin{lemma}\label{l:P*=>wP} If a family $\mathcal N$ of subsets of a topological space $X$ is a Pytkeev$^*$ network at a point $x\in X$, then $\mathcal N$ is an $\w$-Pytkeev network at $x$.
\end{lemma}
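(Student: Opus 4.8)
The plan is to verify directly that $\mathcal N$ satisfies the defining condition of an $\w$-Pytkeev network at $x$, treating separately the accumulating and non-accumulating cases of a countable set $A\subset X$ with $x\in\bar A$, just as in the proof of Proposition~\ref{p:P<=>kP+t}. So fix a neighborhood $O_x\subset X$ of $x$ and a countable set $A=\{a_n:n\in\w\}$ with $x\in\bar A$; we must produce $N\in\mathcal N$ with $x\in N\subset O_x$, $N\cap A\ne\emptyset$, and $N\cap A$ infinite whenever $A$ accumulates at $x$.

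First I would dispose of the case when $A$ \emph{accumulates} at $x$. Enumerate $A$ (with repetitions, if $A$ is finite-to-nowhere — but here $A$ is infinite since it accumulates at $x$) as a sequence $(x_n)_{n\in\w}$ running over the points of $A$; since $A$ accumulates at $x$, every neighborhood of $x$ contains infinitely many of the $x_n$, so the sequence $(x_n)_{n\in\w}$ accumulates at $x$ in the sense of Definition~\ref{d:Pytkeev*}. Applying the Pytkeev$^*$ property of $\mathcal N$ at $x$ to $O_x$ and this sequence yields $N\in\mathcal N$ with $x\in N\subset O_x$ and $N$ containing infinitely many $x_n$; hence $N\cap A$ is infinite, as required.

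The remaining case is when $A$ does \emph{not} accumulate at $x$, and here I expect the only real subtlety. Then there is a neighborhood $V_x\subset O_x$ of $x$ with $V_x\cap A$ finite; as in Proposition~\ref{p:P<=>kP+t} we may shrink $V_x$ so that $V_x\cap A=\ddot x\cap A$, where $\ddot x$ denotes the intersection of all neighborhoods of $x$, and since $x\in\bar A$ this set is nonempty — pick $a\in\ddot x\cap A$. Now consider the constant sequence $(x_n)_{n\in\w}$ with $x_n=a$ for all $n$. Because $a\in\ddot x$, every neighborhood of $x$ contains $a$, hence contains $x_n$ for all $n\in\w$, so this constant sequence accumulates at $x$. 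Applying the Pytkeev$^*$ property to $O_x$ and this sequence gives $N\in\mathcal N$ with $x\in N\subset O_x$ and $N$ containing infinitely many of the $x_n$, i.e. $a\in N$; thus $\emptyset\ne\{a\}\subset N\cap A$, which is exactly what is needed in the non-accumulating case.

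The main obstacle is really just the bookkeeping around $\ddot x$: one must notice that the Pytkeev$^*$ condition, phrased only for accumulating sequences, still delivers a set $N$ hitting $A$ in the non-accumulating case, and the trick — as above — is that a point lying in $\ddot x$ spawns a constant accumulating sequence, so the Pytkeev$^*$ property applies. (If one prefers, Proposition~\ref{p:P*p<=>P*f} lets one argue with a constant sequence of singletons instead, but the constant-sequence-of-points version suffices.) Everything else is the routine matching of definitions, so no lengthy computation is involved.
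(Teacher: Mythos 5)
Your proof is correct and follows essentially the same route as the paper's: the non-accumulating case rests on the same $\ddot x$ observation (the paper takes an arbitrary, possibly constant, sequence in $A\cap\ddot x$ where you take a constant one), and the accumulating case on feeding an injective enumeration of $A$ to the Pytkeev$^*$ condition. If anything your accumulating case is slightly cleaner, since an injective enumeration of $A$ already accumulates at $x$, which makes the paper's subcase split on whether $A\cap\ddot x$ is finite or infinite unnecessary.
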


\begin{proof} Given a neighborhood $O_x\subset X$ of $x$ and a countable subset $A\subset X$ with $x\in\bar A$, we need to find a set $N\in\mathcal N$ such that $x\in N\subset O_x$, $N\cap A\ne\emptyset$ and moreover the intersection $N\cap A$ is infinite if $A$ accumulates at $x$.

If $A$ does not accumulate at $x$, then we can find a neighborhood $V_x\subset X$ of $x$ such that the intersection $V_x\cap A$ is finite. Replacing $V_x$ by a smaller neighborhood, we can assume that $V_x\cap A=\ddot x\cap A$, where $\ddot x$ is the intersection of all neighborhoods of $x$ in $X$. Choose any sequence $\{x_n\}_{n\in\w}\subset A\cap\ddot x$ and observe that this sequence accumulates at the point $x$. Since $\mathcal N$ is a Pytkeev$^*$ network at $x$, there is a set $N\in\mathcal N$ such that $x\in N\subset O_x$ and $N$ contains infinitely many points $x_n\in A$. In particular, $N$ intersects the set $A$.

  Now assume that the set $A$ accumulates at $x$. If $A\cap \ddot x$ is infinite, then we can choose a sequence $(x_n)_{n\in\w}$ of pairwise distinct points in $A\cap\ddot x$. It follows that the sequence $(x_n)_{n\in\w}$   accumulates at $x$. Since $\mathcal N$ is a Pytkeev$^*$ network at $x$, there exists a set $N\in\mathcal N$ such that $x\in N\subset O_x$ and $N$ contains infinitely many points $x_n$, $n\in\w$. Since these points are pairwise distinct, the intersection $N\cap A\cap\ddot x\subset N\cap A$ is infinite and we are done.

So, assume that $A\cap\ddot x$ is finite. Since the set $A$ accumulates at $x$, the complement $A\setminus\ddot x$ is also accumulates at $x$. So, $A\setminus\ddot x=\{x_n\}_{n\in\w}$ for some sequence $(x_n)_{n\in\w}$ of pairwise distinct points of $X$. Since the sequence $(x_n)_{n\in\w}$ accumulates at $x$ and $\mathcal N$ is a Pytkeev$^*$ network, there exists a set $N\in\mathcal N$ containing infinitely many points $x_n$, $n\in\w$. Since these points are pairwise distinct, the intersection $N\cap B\subset N\cap A$ is infinite, witnessing that $\mathcal N$ is an $\w$-Pytkeev network at $x$.
\end{proof}

Lemma~\ref{l:P*=>wP} and Corollary~\ref{c:ct+P=>wP} imply:

\begin{corollary}\label{c:ct+P*=>P} If a topological  space $X$ is countably tight at a point $x\in X$, then each Pytkeev$^*$-network at $x$ is a Pytkeev network at $x$.
\end{corollary}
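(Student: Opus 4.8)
The plan is to deduce the statement formally from the two results quoted immediately before it, with no new work beyond bookkeeping. Fix a point $x\in X$ at which $X$ is countably tight, so that $t_x(X)=\w$, and let $\mathcal N$ be an arbitrary Pytkeev$^*$ network at $x$. The goal is to show $\mathcal N$ is a Pytkeev network at $x$.

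First I would apply Lemma~\ref{l:P*=>wP} to $\mathcal N$: since $\mathcal N$ is a Pytkeev$^*$ network at $x$, it is an $\w$-Pytkeev network at $x$. Because $t_x(X)=\w$, being an $\w$-Pytkeev network at $x$ is literally the same as being a $t_x(X)$-Pytkeev network at $x$. Now I would invoke the implication $(3)\Ra(1)$ of Proposition~\ref{p:P<=>kP+t}: a family that is a $t_x(X)$-Pytkeev network at $x$ is automatically a Pytkeev network at $x$. Hence $\mathcal N$ is a Pytkeev network at $x$, which is exactly what the corollary asserts. Alternatively one can cite Corollary~\ref{c:ct+P=>wP} directly, noting that its proof uses countable tightness only at the point $x$.

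I do not expect any genuine obstacle here: the argument is a two-step formal chain through previously established facts. The only point requiring a moment of care is that Corollary~\ref{c:ct+P=>wP} is phrased for a ``countably tight space'', whereas the present statement assumes countable tightness only at the single point $x$; routing the deduction through Proposition~\ref{p:P<=>kP+t} (which is explicitly pointwise) sidesteps this cosmetic mismatch cleanly.
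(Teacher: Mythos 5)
Your proof is correct and follows essentially the same route as the paper, which derives the corollary from Lemma~\ref{l:P*=>wP} together with Corollary~\ref{c:ct+P=>wP} (itself just Proposition~\ref{p:P<=>kP+t} specialized to $t_x(X)=\w$). Your remark about routing through the pointwise Proposition~\ref{p:P<=>kP+t} to avoid the global-tightness phrasing is a reasonable bit of care, but the substance is identical.
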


For a point $x$ of a topological space $X$ by $\ddot x$ we denote the intersection of all neighborhoods of $x$ in $X$. It is clear that $X$ is a $T_1$-space if and only if $\ddot x=\{x\}$ for all $x\in X$.
For a subset $A$ of a topological space $X$ put $\ddot A=\bigcup_{x\in A}\ddot x$. Observe that $A\subset \ddot A$ and each open subset $U\subset X$ containing $A$ contains also $\ddot A$. If $X$ is a $T_1$-space, then $\ddot A=A$.

\begin{lemma}\label{l:wP=>P*} If a family $\mathcal N$ of subsets of a topological space $X$ is an $\w$-Pytkeev network at a point $x\in X$, then the family $\ddot{\mathcal N}=\{\ddot N:N\in\mathcal N\}$ is a Pytkeev$^*$ network at $x\in X$.
\end{lemma}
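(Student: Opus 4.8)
The plan is to verify the defining property of a Pytkeev$^*$ network at $x$ directly for the family $\ddot{\mathcal N}$. It suffices to treat open neighborhoods, so fix an open neighborhood $O_x\subset X$ of $x$ and a sequence $(x_n)_{n\in\w}\in X^\w$ accumulating at $x$, and set $A=\{x_n:n\in\w\}$. Since every neighborhood of $x$ contains some $x_n$, we have $x\in\bar A$; as $A$ is countable, the $\w$-Pytkeev network $\mathcal N$ at $x$ applies to pairs built from $O_x$ and subsets of $A$. Observe at the outset that once we produce $N\in\mathcal N$ with $x\in N\subset O_x$, the set $\ddot N\in\ddot{\mathcal N}$ automatically satisfies $x\in\ddot N\subset O_x$, since $N\subset\ddot N$ and $O_x$ is open and contains $N$; so the only thing to arrange is that $\ddot N$ meet the sequence $(x_n)_{n\in\w}$ infinitely often.

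First I would dispose of the case in which $A$ accumulates at $x$. Then the $\w$-Pytkeev property gives $N\in\mathcal N$ with $x\in N\subset O_x$ and $N\cap A$ infinite, hence $\{n\in\w:x_n\in N\}$ is infinite and $\ddot N\supset N$ is already the required witness; the closure operation plays no role here.

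The substantive case is when $A$ does not accumulate at $x$, and this is where the passage from $N$ to $\ddot N$ is essential and, I expect, the main point of the argument: a non-accumulating sequence can only hit $x$ persistently through points lying in $\ddot x$, and those points get absorbed by the $\ddot{}$-closure of whatever network element we find. In detail, there is a neighborhood $V_x$ of $x$ with $V_x\cap A$ finite, and, shrinking $V_x$ by intersecting with finitely many neighborhoods of $x$ exactly as in the proofs of Proposition~\ref{p:P<=>kP+t} and Lemma~\ref{l:P*=>wP} (using $\ddot x\subset V_x$), we may assume $V_x\cap A=\ddot x\cap A$. Since $(x_n)_{n\in\w}$ accumulates at $x$, the set $\{n\in\w:x_n\in V_x\}=\{n\in\w:x_n\in\ddot x\cap A\}$ is infinite; as $\ddot x\cap A$ is finite and nonempty ($x\in\bar A$ forces $V_x\cap A\ne\emptyset$), the pigeonhole principle yields a point $a\in\ddot x\cap A$ with $x_n=a$ for infinitely many $n\in\w$. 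Because $a\in\ddot x$, every neighborhood of $x$ contains $a$, i.e.\ $x\in\overline{\{a\}}$, so applying the $\w$-Pytkeev property of $\mathcal N$ to $O_x$ and the singleton $\{a\}$ gives $N\in\mathcal N$ with $x\in N\subset O_x$ and $a\in N$. Then $a\in\ddot N$, and since $a=x_n$ for infinitely many $n$, the set $\ddot N\in\ddot{\mathcal N}$ contains infinitely many points $x_n$; together with $x\in\ddot N\subset O_x$ this exhibits $\ddot N$ as the desired witness and finishes the proof.
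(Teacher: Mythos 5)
Your proof is correct and rests on the same two ingredients as the paper's: when the set $A=\{x_n:n\in\w\}$ accumulates at $x$, the $\w$-Pytkeev property yields $N$ with $N\cap A$ infinite and $\ddot N\supset N$ finishes the job, while otherwise the sequence must land in $\ddot x$ infinitely often, and $\ddot N\supset\ddot x$ absorbs it. The only difference is organizational: you split on whether $A$ accumulates, whereas the paper splits on whether $\ddot x$ catches infinitely many terms $x_n$, and in your non-accumulating case you spell out the pigeonhole argument that the paper leaves implicit when it asserts that $A$ accumulates at $x$ whenever $\ddot x$ contains only finitely many of the $x_n$.
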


\begin{proof}  Given a neighborhood $O_x\subset X$ of $x$ and a sequence $(x_n)_{n\in\w}$ of points accumulating at $x$, we need to find a set $\ddot N\in\ddot{\mathcal N}$ containing infinitely points $x_n$, $n\in\w$. If $\ddot x$ contains infinitely many points $x_n$, $n\in\w$, then  take any set $N\in\mathcal N$ with $x\in N\subset O_x$ and conclude that the set $\ddot N\supset\ddot x$ contains infinitely many points $x_n$, $n\in\w$. If $\ddot x$ contains only finitely many points $x_n$, $n\in\w$, then the set $A=\{x_n\}_{n\in\w}$ accumulates at $x$ and we can find a set $N\in\mathcal N$ such that $x\in N\subset O_x$ and $N\cap A$ is infinite. Then the set $\ddot N\in\ddot{\mathcal N}$ is included in $O_x$ and contains infinitely many points $x_n$, $n\in\w$, witnessing that $\mathcal N$ is a countable Pytkeev$^*$ network at $x$.
\end{proof}

Lemmas~\ref{l:P*=>wP} and \ref{l:wP=>P*} imply the following characterization.

\begin{corollary}\label{c:T1P*<=>wP} A family $\mathcal N$ of subsets of a $T_1$-space $X$ is a Pytkeev$^*$ network at a point $x\in X$ if and only if $\mathcal N$ is an $\w$-Pytkeev network at $x$.
\end{corollary}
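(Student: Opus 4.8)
The plan is to obtain both implications directly from the two preceding lemmas, exploiting the one special feature of $T_1$-spaces that is needed, namely the identity $\ddot A=A$ for every subset $A\subset X$.

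For the ``only if'' implication I would simply invoke Lemma~\ref{l:P*=>wP}: if $\mathcal N$ is a Pytkeev$^*$ network at $x$, then it is already an $\w$-Pytkeev network at $x$. Note that this direction does not use the $T_1$ separation axiom at all. For the ``if'' implication, suppose $\mathcal N$ is an $\w$-Pytkeev network at $x$. By Lemma~\ref{l:wP=>P*}, the family $\ddot{\mathcal N}=\{\ddot N:N\in\mathcal N\}$ is a Pytkeev$^*$ network at $x$. Since $X$ is a $T_1$-space, $\ddot y=\{y\}$ for every $y\in X$, and hence $\ddot N=\bigcup_{y\in N}\ddot y=N$ for each $N\in\mathcal N$; consequently $\ddot{\mathcal N}=\mathcal N$, so $\mathcal N$ itself is a Pytkeev$^*$ network at $x$.

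There is no real obstacle here: the corollary is a formal consequence of Lemmas~\ref{l:P*=>wP} and \ref{l:wP=>P*} together with the elementary fact, already recorded in the text, that $\ddot A=A$ in a $T_1$-space. The only point deserving a moment's attention is spelling out why $\ddot N=N$ holds for an arbitrary set $N$ (not assumed open or closed), which follows at once from $\ddot N=\bigcup_{y\in N}\ddot y$ and $\ddot y=\{y\}$.
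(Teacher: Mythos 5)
Your proof is correct and follows exactly the route the paper intends: the ``only if'' direction is Lemma~\ref{l:P*=>wP}, and the ``if'' direction is Lemma~\ref{l:wP=>P*} combined with the observation, already recorded in the text, that $\ddot N=N$ for every subset $N$ of a $T_1$-space, so $\ddot{\mathcal N}=\mathcal N$. Your spelling out of why $\ddot N=N$ is a helpful but minor elaboration of what the paper leaves implicit.
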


Corollary~\ref{c:P<=>wP} and Lemmas~\ref{l:P*=>wP}, \ref{l:wP=>P*} imply the following  characterizations.


\begin{corollary}\label{c:P=ct+P*} A topological space $X$ has a countable Pytkeev network at a point $x\in X$ if and only if $X$ has a countable Pytkeev$^*$ network at $x$ and $X$ is countably tight at $x$.
\end{corollary}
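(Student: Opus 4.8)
The plan is to derive this purely formally from three earlier results: Corollary~\ref{c:P<=>wP}, Lemma~\ref{l:P*=>wP}, and Lemma~\ref{l:wP=>P*}. Both implications come from chaining these together, using additionally the trivial observation that the operation $\mathcal N\mapsto\ddot{\mathcal N}=\{\ddot N:N\in\mathcal N\}$ does not increase cardinality.

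First I would handle the ``only if'' part. Assume $X$ has a countable Pytkeev network $\mathcal N$ at $x$. By Corollary~\ref{c:P<=>wP}, the space $X$ is countably tight at $x$ and $\mathcal N$ is an $\w$-Pytkeev network at $x$. Applying Lemma~\ref{l:wP=>P*}, the family $\ddot{\mathcal N}$ is a Pytkeev$^*$ network at $x$, and since $|\ddot{\mathcal N}|\le|\mathcal N|\le\w$, this Pytkeev$^*$ network is countable. Hence $X$ has a countable Pytkeev$^*$ network at $x$ and is countably tight at $x$.

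For the ``if'' part, assume $X$ is countably tight at $x$ and has a countable Pytkeev$^*$ network $\mathcal N$ at $x$. By Lemma~\ref{l:P*=>wP}, $\mathcal N$ is an $\w$-Pytkeev network at $x$. Since $\mathcal N$ is countable and $X$ is countably tight at $x$, Corollary~\ref{c:P<=>wP} applies and shows that $\mathcal N$ is a Pytkeev network at $x$; thus $X$ has a countable Pytkeev network at $x$.

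There is no genuine obstacle here, as the statement is a formal consequence of the cited results. The only two points worth a moment's care are: in the ``only if'' direction, that passing from each $N$ to $\ddot N$ preserves countability (immediate); and in the ``if'' direction, that one cannot skip the countable-tightness hypothesis — an $\w$-Pytkeev network need not be a Pytkeev network in general, so the appeal to Corollary~\ref{c:P<=>wP}, where countable tightness at $x$ is built in, is essential.
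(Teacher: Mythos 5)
Your proof is correct and follows exactly the route the paper intends: the paper derives this corollary by simply citing Corollary~\ref{c:P<=>wP} together with Lemmas~\ref{l:P*=>wP} and \ref{l:wP=>P*}, and your argument is precisely the fleshed-out version of that chain, including the necessary passage to $\ddot{\mathcal N}$ in the ``only if'' direction.
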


\begin{corollary}\label{c:wP<=>wP*} A topological space $X$ has a countable Pytkeev network if and only if $X$ has a countable Pytkeev$^*$ network.
\end{corollary}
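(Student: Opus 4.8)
The plan is to prove both implications with one fixed countable family, since the statement asserts the existence of a \emph{single} countable family that is a Pytkeev (resp.\ Pytkeev$^*$) network at every point, which is formally stronger than having a point-dependent countable such family at each point.

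\emph{Countable Pytkeev network $\Ra$ countable Pytkeev$^*$ network.} Let $\mathcal N$ be a countable Pytkeev network on $X$. Being countable, $\mathcal N$ is point-countable, so Corollary~\ref{c:pcP<=>wP} gives that $X$ is countably tight and $\mathcal N$ is an $\w$-Pytkeev network; in particular $\mathcal N$ is an $\w$-Pytkeev network at every point $x\in X$. Applying Lemma~\ref{l:wP=>P*} at each point, we conclude that $\ddot{\mathcal N}=\{\ddot N:N\in\mathcal N\}$ is a Pytkeev$^*$ network at each point, hence a Pytkeev$^*$ network; and $|\ddot{\mathcal N}|\le|\mathcal N|\le\w$.

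\emph{Countable Pytkeev$^*$ network $\Ra$ countable Pytkeev network.} Let $\mathcal N$ be a countable Pytkeev$^*$ network on $X$. The key first step is to note that $\mathcal N$ is a network: for every $x\in X$ and every neighborhood $O_x$ of $x$, the constant sequence $(x_n)_{n\in\w}$ with $x_n=x$ accumulates at $x$, so Definition~\ref{d:Pytkeev*} supplies $N\in\mathcal N$ with $x\in N\subset O_x$. Consequently $X$ has a countable network, which forces $t_x(X)=\w$ for every $x\in X$: given $A\subset X$ with $x\in\bar A$, pick $x_N\in N\cap A$ for each $N\in\mathcal N$ meeting $A$; then $B=\{x_N:N\in\mathcal N,\ N\cap A\ne\emptyset\}$ is countable and $x\in\bar B$, because every neighborhood $O_x$ of $x$ contains a point $a\in A$ and hence a network element $N$ with $a\in N\subset O_x$, whence $x_N\in B\cap O_x$ (this is exactly the argument in the proof of Lemma~\ref{l:P=>t}). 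So $X$ is countably tight. By Lemma~\ref{l:P*=>wP}, $\mathcal N$ is an $\w$-Pytkeev network at each point, hence an $\w$-Pytkeev network; and since $\mathcal N$ is point-countable and $X$ is countably tight, Corollary~\ref{c:pcP<=>wP} yields that $\mathcal N$ is a Pytkeev network. As $|\mathcal N|\le\w$, this completes the proof.

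The only genuinely delicate point is the second implication, specifically the passage to countable tightness: the Pytkeev$^*$ property constrains accumulation only along \emph{sequences}, whereas countable tightness concerns arbitrary accumulating sets, so the two do not connect directly. The way around this is the observation that a Pytkeev$^*$ network is automatically a network, after which the standard bound of tightness by network weight does the job; everything else is a bookkeeping combination of the already-established pointwise facts (Lemmas~\ref{l:P*=>wP} and~\ref{l:wP=>P*}) with Corollary~\ref{c:pcP<=>wP}, taking care to keep one fixed countable family in play throughout.
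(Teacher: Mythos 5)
Your proof is correct and follows essentially the same route as the paper: the forward direction via Lemma~\ref{l:wP=>P*} applied to $\ddot{\mathcal N}$, and the reverse direction by first observing that a countable Pytkeev$^*$ network is a countable network (hence $X$ is countably tight) and then upgrading to a Pytkeev network. The only difference is that you make explicit two steps the paper leaves implicit — the constant-sequence argument showing a Pytkeev$^*$ network is a network, and the network-weight bound on tightness — which is a welcome clarification rather than a change of method.
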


\begin{proof} The ``only if'' part follows from Lemma~\ref{l:wP=>P*}. To prove the ``if'' part, assume that $X$ has a countable Pytkeev$^*$ network. Then $X$ has a countable network and hence is hereditarily separable and countably tight. By Corollary~\ref{c:ct+P*=>P}, the space $X$ has  a countable Pytkeev network.
\end{proof}

Finally we present an example of a Pytkeev$^*$-network which is not a Pytkeev network.

A point $x$ of a topological space $X$ is a {\em weak $P$-point} if $x\notin \bar A$ for any countable set $A\subset X\setminus\{x\}$. By \cite[4.3.4]{vM} the remainder $\beta\w\setminus \w$ of the Stone-\v Cech compactification $\beta\w$ of $\w$ contains a non-isolated weak $P$-point.

\begin{example} If $X$ is a non-isolated weak $P$-point in a topological space, then the family $\mathcal N=\big\{\{x\}\big\}$ is a Pytkeev$^*$ network at $x$, which is not a Pytkeev network at $x$.
\end{example}

\section{The strong Pytkeev$^*$ property versus the strong Pytkeev property}

In this section we recall the definition of the strong Pytkeev property, introduce the strong Pytkeev$^*$ property and study the interplay between these two notions.

 \begin{definition}\label{d:sPp} A topological space $X$ is defined to have
 \begin{itemize}
 \item  the {\em strong Pytkeev property at a point} $x\in X$ if $X$ has  a countable Pytkeev network at $x\in X$;
 \item  the {\em strong Pytkeev property} if $X$ has the strong Pytkeev property at each point $x\in X$.
 \end{itemize}
\end{definition}
The strong Pytkeev property was introduced in \cite{TZ} and thoroughly studied in \cite{MSak}, \cite{GKL14}, \cite{BL}.

Replacing in Definition~\ref{d:sPp} Pytkeev networks by Pytkeev$^*$ networks, we obtain  the definition of the strong Pytkeev$^*$ property.

 \begin{definition}\label{d:sPp}  A topological space $X$ is defined to have
 \begin{itemize}
 \item  the {\em strong Pytkeev$^*$ property at a point} $x\in X$ if $X$ has  a countable Pytkeev$^*$ network at $x\in X$;
 \item  the {\em strong Pytkeev$^*$ property} if $X$ has the strong Pytkeev$^*$ property at each point $x\in X$.
 \end{itemize}
\end{definition}

Corollary~\ref{c:P=ct+P*} implies that strong Pytkeev property decomposes into the combination of the countable tightness and the strong Pytkeev$^*$ property.

\begin{theorem}\label{t:P<=>P*+ct} A topological space $X$ has the strong Pytkeev property if and only if $X$ has the strong Pytkeev$^*$ property and $X$ is  countably tight.
\end{theorem}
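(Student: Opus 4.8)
The plan is to obtain the statement by applying Corollary~\ref{c:P=ct+P*} at every point of $X$. Recall that, unwinding the relevant definitions, $X$ has the strong Pytkeev property iff $X$ has a countable Pytkeev network at each point $x\in X$; $X$ has the strong Pytkeev$^*$ property iff $X$ has a countable Pytkeev$^*$ network at each point $x\in X$; and $X$ is countably tight iff $t_x(X)=\w$ for each $x\in X$. Corollary~\ref{c:P=ct+P*} asserts exactly that, for a fixed point $x$, having a countable Pytkeev network at $x$ is equivalent to the conjunction of having a countable Pytkeev$^*$ network at $x$ and being countably tight at $x$. So the whole proof reduces to quantifying this pointwise equivalence over all $x\in X$.

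For the ``only if'' part I would fix an arbitrary $x\in X$; the strong Pytkeev property furnishes a countable Pytkeev network at $x$, so Corollary~\ref{c:P=ct+P*} yields a countable Pytkeev$^*$ network at $x$ together with $t_x(X)=\w$. As $x$ ranges over $X$ this gives both the strong Pytkeev$^*$ property and countable tightness of $X$. Conversely, assuming the strong Pytkeev$^*$ property and countable tightness of $X$, for each $x\in X$ we have a countable Pytkeev$^*$ network at $x$ and $t_x(X)=\w$, so Corollary~\ref{c:P=ct+P*} produces a countable Pytkeev network at $x$; hence $X$ has the strong Pytkeev property.

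I expect no real obstacle: all the substance — the passage between $\w$-Pytkeev networks and Pytkeev$^*$ networks via the sets $\ddot N$, and the role of countable tightness — is already encapsulated in Corollary~\ref{c:P=ct+P*} and the lemmas leading to it (Lemmas~\ref{l:P*=>wP} and \ref{l:wP=>P*}, together with Corollary~\ref{c:P<=>wP}). The only point requiring a moment's care is the bookkeeping observation that the three notions appearing in the theorem are precisely the ``at each point'' quantifications of their pointwise counterparts, so that the pointwise equivalence lifts verbatim to the global statement.
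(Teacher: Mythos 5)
Your proposal is correct and matches the paper exactly: the paper derives Theorem~\ref{t:P<=>P*+ct} precisely by quantifying Corollary~\ref{c:P=ct+P*} over all points of $X$, with no additional argument needed. The pointwise-to-global bookkeeping you describe is the whole content of the proof.
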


In \cite[3.2]{MSak} Sakai observed that the first countability decomposes into two properties: the strong Pytkeev property and the countable fan tightness.

\begin{definition} A topological space $X$ is defined to have
{\em countable fan} ({\em ofan}) {\em tightness at a point} $x\in X$ if for every decreasing sequence $(A_n)_{n\in\w}$ of (open) subsets of $X$ with $x\in\bigcap_{n\in\w}\bar A_n$ there exist a sequence of finite sets $F_n\subset A_n$, $n\in\w$, such that each neighborhood of $x$ intersects infinitely many sets $F_n$, $n\in\w$.

A topological space $X$ is defined to have the {\em countable fan} ({\em ofan}) {\em tightness} if
$X$ has {\em countable fan} ({\em ofan}) {\em tightness}  at each point $x\in X$.
\end{definition}

The countable fan tightness was introduced by Arhangelskii \cite{Ar86} and is well-known and useful property in $C_p$-theory \cite{Arh}. Its ``open'' modification was introduced by Sakai \cite{MSak} as the property $(\#)$. He used this modification to prove the following characterization of the first countability (which can be also find in \cite[1.6, 1.7]{BL}). We recall that a topological space $X$ is {\em first-countable} at a point $x\in X$ if $X$ has a countable neighborhood base at $x$.

\begin{theorem}[Sakai]\label{t:MSak} A (regular) topological space $X$ is first countable at a point $x\in X$ if and only if $X$ simultaneously has countable fan (ofan) tightness at $x$ and the strong Pytkeev property at $x$.
\end{theorem}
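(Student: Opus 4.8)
The plan is to prove the two nontrivial implications, running the general statement (with countable fan tightness) and the parenthetical regular statement (with countable ofan tightness) in parallel; they will differ only by a preliminary reduction that makes the test sets open. For the implication \emph{first countability $\Ra$ both properties}, fix a decreasing neighbourhood base $(U_n)_{n\in\w}$ at $x$. Then $\{U_n:n\in\w\}$ is a Pytkeev network at $x$: given $O_x$ and $A$ with $x\in\bar A$, choose $m$ with $U_m\subset O_x$; since $U_m$ is a neighbourhood of $x$ we get $U_m\cap A\ne\emptyset$, and $U_m\cap A$ is infinite whenever $A$ accumulates at $x$. Hence $X$ has the strong Pytkeev property at $x$. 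For countable fan (ofan) tightness, given a decreasing sequence of (open) sets $A_n$ with $x\in\bigcap_n\overline{A_n}$, pick $a_n\in U_n\cap A_n$ and set $F_n:=\{a_n\}$; if $U_m\subset O_x$ then $a_n\in U_n\subset O_x$ for all $n\ge m$, so $O_x$ meets infinitely many $F_n$.

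For the converse \emph{both properties $\Ra$ first countability}, let $\mathcal N$ be a countable Pytkeev network at $x$ and assume, towards a contradiction, that $X$ is not first countable at $x$. Discarding the sets not containing $x$, we may assume $x\in N$ for all $N\in\mathcal N$; in the regular case we additionally replace $\mathcal N$ by $\{\bar N:N\in\mathcal N\}$, which is again a countable Pytkeev network at $x$ (shrink a given $O_x$ to an open $O_x'$ with $\overline{O_x'}\subset O_x$ before applying the Pytkeev property of $\mathcal N$), so that all members of $\mathcal N$ become closed. Since $\mathcal N$ is in particular a network at $x$ and $X$ is not first countable at $x$, the countable family $\mathcal U$ of those finite unions of members of $\mathcal N$ which are neighbourhoods of $x$ is not a neighbourhood base at $x$ (otherwise $X$ would be first countable at $x$), so we may fix an open neighbourhood $W\ni x$ containing no member of $\mathcal U$. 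Enumerate $\{N\in\mathcal N:N\subset W\}=\{M_k:k\in\w\}$; this family is still a network at $x$ (shrink $O_x$ to $O_x\cap W$), and for every $n$ the finite union $U_n:=M_0\cup\dots\cup M_{n-1}\subset W$ is, by the choice of $W$, not a neighbourhood of $x$, whence $x\in\overline{W\setminus U_n}$. Put $A_n:=W\setminus U_n$. Then $(A_n)_{n\in\w}$ is decreasing, $x\in\bigcap_n\overline{A_n}$, and in the regular case each $A_n$ is open since each $U_n$ is closed.

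Now apply countable fan (ofan) tightness to $(A_n)_{n\in\w}$, obtaining finite sets $F_n\subset A_n$ such that every neighbourhood of $x$ meets infinitely many $F_n$; put $S:=\bigcup_{n\in\w}F_n$. I claim $S$ accumulates at $x$. First, $x\notin\overline{B}$ where $B:=W\setminus\bigcup_kM_k=\bigcap_nA_n$: if $x\in\overline{B}$, then the Pytkeev property applied to $W$ and $B$ gives $N\in\mathcal N$ with $x\in N\subset W$ and $N\cap B\ne\emptyset$, but $N\subset W$ forces $N=M_k$ for some $k$, whence $N\subset\bigcup_kM_k$ and $N\cap B=\emptyset$ — a contradiction. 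Fix a neighbourhood $V_0\ni x$ with $V_0\cap B=\emptyset$. Since $(A_n)$ is decreasing, any point lying in infinitely many $F_n$ lies in $\bigcap_nA_n=B$, hence outside $V_0$; therefore for any neighbourhood $V\subset V_0$, choosing a point of $F_n\cap V$ for each of the infinitely many $n$ with $F_n\cap V\ne\emptyset$ yields infinitely many distinct points of $V\cap S$, as no point of $V$ lies in infinitely many $F_n$. Thus every neighbourhood of $x$ contains infinitely many points of $S$, i.e. $S$ accumulates at $x$. Finally, apply the Pytkeev property to $W$ and $S$: there is $N\in\mathcal N$ with $x\in N\subset W$ and $N\cap S$ infinite; then $N=M_k$ for some $k$, so $N$ meets $F_n$ for infinitely many $n$, while for every $n>k$ we have $M_k\subset U_n$ and hence $F_n\subset A_n=W\setminus U_n$ is disjoint from $M_k=N$ — the desired contradiction.

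The main obstacle is the construction of the decreasing sequence $(A_n)$: it must shrink, keep $x$ in every closure, and eventually avoid each member of the Pytkeev network, and these are reconciled only by choosing $W$ against \emph{finite unions} of network sets (a single set would not be enough) and, in the regular case, by first passing to closures so that the $A_n$ come out open. The second delicate point is promoting the fan selection $(F_n)$ to a set $S$ that genuinely accumulates at $x$, so that the Pytkeev property returns an infinite — rather than merely nonempty — intersection; this rests on the auxiliary fact $x\notin\overline{\bigcap_nA_n}$, itself a short consequence of the Pytkeev property.
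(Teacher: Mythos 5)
Your proof is correct. A caveat before comparing: the paper does not actually prove Theorem~\ref{t:MSak} --- it is imported from Sakai \cite{MSak} and \cite[1.6, 1.7]{BL} --- so the natural in-house benchmark is the proof of the implication $(3)\Ra(1)$ of Theorem~\ref{t:1<=>P*+cft}. Your argument follows the same master plan as that proof: enumerate the network members contained in a well-chosen neighbourhood $W$, form the partial unions $U_n$, assume none is a neighbourhood of $x$, feed $A_n=W\setminus U_n$ to countable (o)fan tightness, and let the Pytkeev property contradict the resulting selection $(F_n)_{n\in\w}$. There are two genuine points of divergence. First, because you work with Pytkeev networks (which test a single set $A$) rather than Pytkeev$^*$ networks (which, via Proposition~\ref{p:P*p<=>P*f}, test sequences of finite sets directly), you must package $(F_n)_{n\in\w}$ into the single set $S=\bigcup_nF_n$ and show that $S$ accumulates at $x$; your auxiliary claim $x\notin\overline{\bigcap_nA_n}$, itself squeezed out of the Pytkeev property, is exactly what is needed to guarantee that no point of a small neighbourhood recurs in infinitely many $F_n$, and this step is sound --- it is the one piece of real work your route requires that the Pytkeev$^*$ route avoids. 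Second, where the paper closes $\mathcal N$ under finite unions and invokes semiregularity to turn a closed partial union into a basic neighbourhood, you instead pick $W$ to defeat every finite union of network members that happens to be a neighbourhood of $x$, and in the regular case you first replace $\mathcal N$ by the closures $\bar N$ so that the $A_n$ are open for the ofan version; both reductions are correct. Two cosmetic remarks: the enumeration $\{M_k\}_{k\in\w}$ must be allowed to repeat when $\{N\in\mathcal N:N\subset W\}$ is finite (your argument survives this, since a contradiction is reached anyway), and the passage from ``every neighbourhood $V\subset V_0$ meets $S$ in an infinite set'' to ``every neighbourhood of $x$ does'' should mention intersecting with $V_0$. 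Neither affects correctness.
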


In this characterization the strong Pytkeev property can be weakened to the strong Pytkeev$^*$ property and the regularity of the space $X$ can be weakened to the semiregularity of $X$.

A topological space $X$ is called {\em semiregular at a point} $x\in X$ if each neighborhood of $x$ contains the interior of the closure of some other neighborhood of $x$. A topological space is {\em semiregular} if it is semiregular at each point.

\begin{theorem}\label{t:1<=>P*+cft}  For a topological space $X$ and a point $x\in X$ the following conditions are equivalent:
\begin{enumerate}
\item $X$ has a countable neighborhood base at $x$;
\item $X$ has the strong Pytkeev$^*$ property at $x$ and has countable fan tightness at $x$.
\end{enumerate}
If the space $X$ is semiregular at $x$, then the conditions \textup{(1), (2)} are equivalent to
\begin{enumerate}
\item[(3)] $X$ has the strong Pytkeev$^*$ property at $x$ and the countable ofan tightness at $x$.
\end{enumerate}
\end{theorem}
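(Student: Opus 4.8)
The plan is to prove the cycle $(1)\Ra(2)\Ra(1)$ first, and then, under the assumption of semiregularity at $x$, close the cycle $(1)\Ra(3)\Ra(1)$ (noting $(2)\Ra(3)$ is trivial since countable fan tightness implies countable ofan tightness).

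The implication $(1)\Ra(2)$ is the easy direction. If $(U_n)_{n\in\w}$ is a decreasing countable neighborhood base at $x$, then $\{U_n:n\in\w\}$ is clearly a Pytkeev$^*$ network at $x$: given a neighborhood $O_x$ and a sequence $(x_n)$ accumulating at $x$, pick $U_m\subset O_x$; every $U_m$ contains infinitely many $x_n$ since the sequence accumulates at $x$. For countable fan tightness, given a decreasing sequence $(A_n)$ of subsets with $x\in\bigcap\bar A_n$, I would use the base: for each $n$, since $x\in\bar A_n$ and $U_n$ is a neighborhood of $x$, choose a point (or a singleton $F_n=\{y_n\}$) in $A_n\cap U_n$; then any neighborhood $O_x$ contains some $U_m$, and $U_m\supset U_k$ for $k\ge m$, so $O_x$ meets $F_k$ for all $k\ge m$, in particular infinitely many $F_n$.

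The substantive implication is $(2)\Ra(1)$. I assume $\mathcal N=\{N_k:k\in\w\}$ is a countable Pytkeev$^*$ network at $x$ and $X$ has countable fan tightness at $x$, and I want a countable neighborhood base. The natural candidate is the (countable) family $\mathcal B$ of all \emph{finite intersections of those $N_k$ that contain $x$ and are neighborhoods of $x$} — but the members of $\mathcal N$ need not be neighborhoods, so instead I would work with the family of all sets of the form $X\setminus(N_{k_1}\cup\dots\cup N_{k_m})$ together with small neighborhoods, or more cleanly argue by contradiction: suppose no countable subfamily of neighborhoods derived from $\mathcal N$ forms a base. Concretely, let $\mathcal B$ be the countable collection of all finite intersections $N_{k_1}\cap\dots\cap N_{k_m}$ which happen to be neighborhoods of $x$ and contain $x$; if $\mathcal B$ were a neighborhood base we would be done, so assume there is a neighborhood $O_x$ containing no member of $\mathcal B$. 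The goal is to build a sequence accumulating at $x$ that defeats the Pytkeev$^*$ property — here is where countable fan tightness enters. Enumerate $\mathcal B=\{B_n:n\in\w\}$ (WLOG decreasing by passing to finite intersections) and note $B_n\setminus O_x\ne\emptyset$ for all $n$; but we need the $B_n\setminus O_x$ to accumulate at $x$, which requires $x\in\bigcap_n\overline{B_n\setminus O_x}$. This is not automatic, so I would instead set $A_n=B_n\setminus O_x$ and first check $x\in\bar A_n$: if some $B_n\subset \bar{(B_n\cap O_x)}$ fails... this is the delicate point. The cleanest route: since $\mathcal B$ is a neighborhood filter base that is \emph{not} a base, the sets $A_n=B_n\setminus O_x$ satisfy $x\in\overline{A_n}$ for all $n$ (because any neighborhood $V_x$ fails to be contained in $O_x$ after intersecting with $B_n$ — using that no $B_n\subset O_x$ and $\mathcal B$ is closed under finite intersections), so by countable fan tightness there are finite sets $F_n\subset A_n$ with every neighborhood of $x$ meeting infinitely many $F_n$; hence $(F_n)$ accumulates at $x$. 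By Proposition~\ref{p:P*p<=>P*f} there is $N\in\mathcal N$ with $x\in N\subset O_x$ meeting infinitely many $F_n\subset X\setminus O_x$ — contradiction, since $N\subset O_x$. Thus $\mathcal B$ is a countable base at $x$.

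For the semiregular case, $(2)\Ra(3)$ is immediate. The remaining work is $(3)\Ra(1)$ under semiregularity at $x$: repeat the argument above with open sets. Given the countable Pytkeev$^*$ network, let $\mathcal B$ now be the countable family of interiors of closures of finite intersections of network members that are neighborhoods of $x$; semiregularity guarantees that if $\mathcal B$ is cofinal in the neighborhood filter it is a base. Assuming it is not a base, fix a neighborhood $O_x$ with no member of $\mathcal B$ inside it, and by semiregularity shrink $O_x$ to an open set; then the sets $A_n=B_n\setminus O_x$ are \emph{open} (being differences of an open set and a closed set — here one should take $B_n$ open and $O_x$ open with $\bar O_x$... actually $A_n$ should be taken as open subsets, e.g. $B_n\setminus\bar{O_x}$, ensuring $x\in\bar A_n$ as before), so countable ofan tightness applies and yields finite $F_n\subset A_n$ accumulating at $x$, again contradicting the Pytkeev$^*$ property via Proposition~\ref{p:P*p<=>P*f}.

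The main obstacle I anticipate is verifying $x\in\overline{A_n}$ where $A_n=B_n\setminus O_x$ — i.e., that the failure of $\mathcal B$ to be a base genuinely forces these difference sets to cluster at $x$, rather than merely being nonempty. Handling this correctly is what dictates the exact form of $\mathcal B$ (closure under finite intersections is essential) and, in part (3), the exact choice of open representatives and the role of semiregularity; the rest is a routine application of countable (o)fan tightness together with Proposition~\ref{p:P*p<=>P*f}.
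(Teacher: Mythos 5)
Your overall strategy for the hard directions --- use (o)fan tightness to manufacture a sequence of finite sets accumulating at $x$ and then contradict the Pytkeev$^*$ property via Proposition~\ref{p:P*p<=>P*f} --- is the same as the paper's, but your concrete construction has two fatal defects. First, the step you yourself flag as delicate is not merely delicate but false: for $A_n=B_n\setminus O_x$ (or $B_n\setminus\bar O_x$) the set $A_n$ is disjoint from the neighborhood $O_x$ of $x$ by construction, so $x\notin\overline{A_n}$ \emph{always}, regardless of how $\mathcal B$ is chosen or whether it is closed under finite intersections; fan tightness can never be applied to these sets and the contradiction never materializes. Second, your candidate base consists of finite intersections of network members that ``happen to be neighborhoods of $x$,'' but members of a Pytkeev$^*$ network need not be neighborhoods, and this family can be empty even in a first-countable space: in $\mathbb R$ the sets $\{x\}\cup\big((x-1/n,x+1/n)\cap\mathbb Q\big)$ and $\{x\}\cup\big((x-1/n,x+1/n)\setminus\mathbb Q\big)$, $n\ge 1$, form a countable Pytkeev$^*$ network at $x$ none of whose members or finite intersections is a neighborhood of $x$.

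The paper repairs both defects at once. For $(3)\Rightarrow(1)$ it closes $\mathcal N$ under finite \emph{unions}, uses semiregularity to fix a neighborhood $U_x$ with $\bar U_x^\circ\subset O_x$, enumerates the members $N_k$ of $\mathcal N$ contained in $U_x$, and applies ofan tightness to the open sets $A_k=X\setminus M_k$ where $M_k=\overline{N_0\cup\dots\cup N_k}$: here ``$M_k$ is not a neighborhood of $x$'' is literally equivalent to ``$x\in\overline{A_k}$,'' which is exactly the clustering property your difference sets lack. If no $M_k$ were a neighborhood, the resulting finite sets $F_k\subset A_k$ would accumulate at $x$, and the witnessing set $N\in\mathcal N$ from Proposition~\ref{p:P*p<=>P*f} would equal some $N_k$ and hence lie in $M_n\subset X\setminus F_n$ for all $n\ge k$ --- a contradiction. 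So some $M_k$ is a neighborhood of $x$, and for $N=N_0\cup\dots\cup N_k\in\mathcal N$ the set $\bar N^\circ$ satisfies $x\in\bar N^\circ\subset\bar U_x^\circ\subset O_x$; thus $\{\bar N^\circ:N\in\mathcal N,\ x\in\bar N^\circ\}$ is the countable base (interiors of closures, not intersections of neighborhoods). For $(2)\Rightarrow(1)$ the paper does not construct anything: it combines Corollary~\ref{c:P=ct+P*} (countable fan tightness gives countable tightness, upgrading the Pytkeev$^*$ network to a Pytkeev network) with Sakai's Theorem~\ref{t:MSak}. Your $(1)\Rightarrow(2)$ and the observation that $(2)\Rightarrow(3)$ is trivial are fine.
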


\begin{proof} The implications $(1)\Ra(2)\Ra(3)$ are trivial. The implication $(2)\Ra(1)$ follow from Theorem~\ref{t:MSak}, Corollary~\ref{c:P=ct+P*} and the countable tightness of spaces with countable fan tightness.

To prove that $(3)\Ra(1)$ assume that $X$ is semiregular at $x$, $X$ has countable fan open-tightness at $X$,  and $X$ has a countable Pytkeev$^*$ network $\mathcal N$ at $x$. Replacing $\mathcal N$ by a largest countable family, we can assume that $\mathcal N$ is closed under finite unions. For every set $N\in \mathcal N$ let $\bar N^\circ$ be the interior of the closure $\bar N$ of the set $N$ in $X$. Consider the countable family $\mathcal B=\{\bar N^\circ:N\in\mathcal N\}$ of open sets in $X$. We claim that its subfamily $\mathcal B_x=\{B\in\mathcal B:x\in B\}$ is a neighborhood base at $x$.

Given a neighborhood $O_x\subset X$ of $x$, we should find a set $B\in \mathcal B_x$ such that $B\subset O_x$. Since the space $X$ is semiregular at $x$, the neighborhood $O_x$ contains the interior $\bar U_x^\circ$ of the closure $\bar U_x$ of some neighborhood $U_x$ of $x$.

Let $\{N_k\}_{k\in\w}$ be an enumeration of the countable subfamily $\mathcal N'=\{N\in\mathcal N:N\subset U_x\}$ and let $M_k$ be the closure of the set $\bigcup_{i\le k}N_i$ in $X$. We claim that for some $k\in\w$ the set $M_k$ is a neighborhood of $x$. Assuming that this not true, we conclude that for every $k\in\w$ the open set $A_k=X\setminus M_k$ contains the point $x$ in its closure.  By the countable ofan tightness of $X$ at $x$, there exists a sequence $(F_k)_{k\in\w}$ of finite subsets $F_k\subset A_k$, which accumulates at $x$. By Proposition~\ref{p:P*p<=>P*f}, the Pytkeev$^*$ network contains a set $N\in\mathcal N$ such that $x\in N\subset U_x$ and $N$ intersects infinitely many sets $F_k$, $k\in\w$. It follows that $N\in\mathcal N'$ and hence $N=N_k$ for some $k\in\w$. Since $N\subset M_n\subset X\setminus F_n$ for all $n\ge k$, the set $N$ cannot intersect the sets $F_n$ for $n\ge k$. This contradiction shows that for some $k\in\w$ the set $M_k$ is a neighborhood of $x$.  Since the family $\mathcal N$ is closed under finite unions, the set $N=\bigcup_{i\le k}N_i$ belongs to the family $\mathcal N$ and the interior $\bar N^\circ$ of the closure $\bar N=M_k$ of $N$ is a neighborhood of $x$ such that $\bar N^\circ \subset \bar U_x^\circ \subset O_x$. Since $\bar N^\circ\in\mathcal B_x$, the countable family $\mathcal B_x$ is a neighborhood base at $x$.
\end{proof}

A similar characterization holds for second countable spaces.

\begin{theorem}  For a topological space $X$ the following conditions are equivalent:
\begin{enumerate}
\item $X$ is second-countable;
\item $X$ has a countable Pytkeev$^*$ network and $X$ has countable fan tightness at $x$.
\end{enumerate}
If the space $X$ is semiregular, then the conditions \textup{(1), (2)} are equivalent to
\begin{enumerate}
\item[(3)] $X$ has a countable Pytkeev$^*$ network and $X$ has countable ofan tightness.
\end{enumerate}
\end{theorem}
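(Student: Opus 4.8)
The plan is to establish $(1)\Ra(2)\Ra(3)$ together with the reverse implications $(2)\Ra(1)$ and — under semiregularity — $(3)\Ra(1)$; essentially all the work is in these last two, which globalize the proof of Theorem~\ref{t:1<=>P*+cft}. For $(1)\Ra(2)$, a countable base $\mathcal B$ of a second-countable space $X$ is a Pytkeev$^*$ network (given a neighborhood $O_x$ of $x$ and a sequence accumulating at $x$, any basic set $N$ with $x\in N\subset O_x$ is a neighborhood of $x$ and hence contains infinitely many terms of the sequence), and $X$ is first-countable, hence has countable fan tightness (for a decreasing sequence $(A_n)$ with $x\in\bigcap_n\bar A_n$ and a decreasing countable base $(U_n)$ at $x$, pick $a_n\in A_n\cap U_n$ and set $F_n=\{a_n\}$). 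The implication $(2)\Ra(3)$ is immediate, since countable fan tightness trivially implies countable ofan tightness (the latter only quantifies over decreasing sequences of open sets).

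For $(2)\Ra(1)$ I would take a countable Pytkeev$^*$ network $\mathcal N$ for $X$ and, replacing it by the (still countable, still Pytkeev$^*$) family of all finite unions of its members, assume that $\mathcal N$ is closed under finite unions. The candidate base is the countable family $\mathcal B=\{N^\circ:N\in\mathcal N\}$ of interiors. To see it is a base, fix $x\in X$ and an open neighborhood $O_x$ of $x$; since a Pytkeev$^*$ network is a network, the subfamily $\mathcal N'=\{N\in\mathcal N:N\subset O_x\}$ is nonempty, so enumerate it as $\{N_k\}_{k\in\w}$ and put $M_k=\bigcup_{i\le k}N_i\in\mathcal N$. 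It suffices to find $k$ with $M_k$ a neighborhood of $x$, for then $x\in M_k^\circ\subset O_x$ and $M_k^\circ\in\mathcal B$. Suppose no $M_k$ is a neighborhood of $x$; then the decreasing sequence of sets $A_k=X\setminus M_k$ satisfies $x\in\bar A_k$ for every $k$, so countable fan tightness at $x$ yields finite sets $F_k\subset A_k$ accumulating at $x$, and Proposition~\ref{p:P*p<=>P*f} then gives a set $N\in\mathcal N$ with $x\in N\subset O_x$ meeting infinitely many of the $F_k$. But $N\in\mathcal N'$, so $N=N_j$ for some $j$, whence $N\subset M_k$ and $N\cap F_k\subset N\cap A_k=\emptyset$ for all $k\ge j$ — a contradiction. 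Hence $\mathcal B$ is a countable base and $X$ is second-countable.

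For $(3)\Ra(1)$ with $X$ semiregular I would simply reread the proof of the implication $(3)\Ra(1)$ of Theorem~\ref{t:1<=>P*+cft}: fixing a countable Pytkeev$^*$ network $\mathcal N$ closed under finite unions, that argument shows, for an arbitrary point $x$, that $\{B\in\mathcal B:x\in B\}$ is a neighborhood base at $x$, where $\mathcal B=\{\bar N^\circ:N\in\mathcal N\}$; and it uses only semiregularity at $x$, countable ofan tightness at $x$, and the Pytkeev$^*$ property of $\mathcal N$ at $x$. Under the hypotheses of (3) all three are available at every point, while the countable family $\mathcal B$ of open sets does not depend on $x$, so $\mathcal B$ is a countable base of $X$.

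I expect the main obstacle to be the implication $(2)\Ra(1)$: one must produce a single countable open base, not merely a countable neighborhood base at each point (which is all that Theorem~\ref{t:1<=>P*+cft}, or Theorem~\ref{t:MSak}, delivers pointwise), and this must be done without semiregularity, so the sets $\bar N^\circ$ used in the $(3)\Ra(1)$ argument are not available; one therefore has to run the fan-tightness argument directly with the raw interiors $N^\circ$, which is precisely why passing to the finite-union closure of $\mathcal N$ is needed to make the sets $M_k$ themselves lie in $\mathcal N$.
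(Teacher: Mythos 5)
Your proof is correct, and it diverges from the paper in one substantive place: the implication $(2)\Ra(1)$. The paper disposes of this step by citing Corollary~\ref{c:wP<=>wP*} together with Theorem~1.12 of \cite{Ban} (a space with a countable Pytkeev network and countable fan tightness is second-countable), so the real work is outsourced to an external result; you instead give a self-contained argument that runs the fan-tightness machinery directly on the raw interiors $N^\circ$ of a finite-union-closed countable Pytkeev$^*$ network, which is essentially the $(3)\Ra(1)$ scheme of Theorem~\ref{t:1<=>P*+cft} stripped of the closure operation --- and your observation that taking $M_k=\bigcup_{i\le k}N_i$ rather than its closure keeps $M_k\subset O_x$ and thereby eliminates the need for semiregularity is exactly the right point. (Your tacit step that a Pytkeev$^*$ network is a network is justified by applying Definition~\ref{d:Pytkeev*} to the constant sequence at $x$, which accumulates at $x$; this is consistent with how the paper itself uses constant-type sequences in Lemma~\ref{l:P*=>wP}.) Your treatment of $(3)\Ra(1)$ --- rereading the proof of Theorem~\ref{t:1<=>P*+cft} and noting that the countable family $\mathcal B=\{\bar N^\circ:N\in\mathcal N\}$ is independent of the point --- is precisely the paper's argument. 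The trade-off is that the paper's route is shorter but depends on a result proved elsewhere, while yours is longer but entirely internal to the tools developed in this paper (Proposition~\ref{p:P*p<=>P*f} and countable fan tightness), and as a by-product it reproves the cited Theorem~1.12 of \cite{Ban} in the Pytkeev$^*$ setting.
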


\begin{proof} The implications $(1)\Ra(2)\Ra(3)$ are trivial and the implication $(2)\Ra(1)$ follows from Corollary~\ref{c:wP<=>wP*} and Theorem~1.12 \cite{Ban} (saying that a space $X$ is second-countable if $X$ has a countable Pytkeev network and countable fan tightness). To prove that $(3)\Ra(1)$, assume that the space $X$ is semiregular, has countable Pytkeev$^*$ network $\mathcal N$ and has countable fan open-tightness. It follows from the proof of Theorem~\ref{t:1<=>P*+cft} that the countable family $\mathcal B=\{\bar N^\circ:N\in\mathcal N\}$ is a base of the topology of $X$.
\end{proof}

\section{Applications of Pytkeev$^*$ networks to $\mathfrak P_0$ and $\mathfrak P$-spaces}

In this section we apply Pytkeev$^*$ networks to characterize $\mathfrak P_0$-spaces and $\mathfrak P$-spaces.

\begin{definition} A regular topological space $X$ is defined to be a
\begin{itemize}
\item a {\em $\mathfrak P_0$-space} if $X$ has a countable Pytkeev network;
\item a {\em $\mathfrak P$-space} if $X$ has a $\sigma$-locally finite Pytkeev network.
\end{itemize}
\end{definition}
$\mathfrak P_0$-spaces and $\mathfrak P$-spaces were introduced and studied in \cite{Ban} and \cite{GK15b}, respectively.   It is clear that each $\mathfrak P_0$-space is a $\mathfrak P$-space and each $\mathfrak P$-space has the strong Pytkeev property.

Corollaries~\ref{c:wP<=>wP*}, \ref{c:pcP<=>wP} and \ref{c:T1P*<=>wP} imply the following characterizations

\begin{theorem} A regular topological space $X$ is a $\mathfrak P_0$-space if and only if $X$  has a countable Pytkeev$^*$ network.
\end{theorem}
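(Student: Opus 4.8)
The plan is to derive this characterization directly from the corollaries cited in the hint, using the fact that a $\mathfrak P_0$-space is by definition a \emph{regular} space with a countable Pytkeev network. First I would prove the ``only if'' part: if $X$ is a $\mathfrak P_0$-space, then $X$ has a countable Pytkeev network $\mathcal N$, and by Lemma~\ref{l:wP=>P*} (applied pointwise, or rather noting that a Pytkeev network is in particular an $\w$-Pytkeev network by Proposition~\ref{p:P<=>kP+t}) the family $\ddot{\mathcal N}=\{\ddot N:N\in\mathcal N\}$ is a countable Pytkeev$^*$ network; alternatively one can simply invoke Corollary~\ref{c:wP<=>wP*} which states that $X$ has a countable Pytkeev network iff it has a countable Pytkeev$^*$ network. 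So this direction is immediate from Corollary~\ref{c:wP<=>wP*}.

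For the ``if'' part, suppose $X$ is regular and has a countable Pytkeev$^*$ network. Again Corollary~\ref{c:wP<=>wP*} applies verbatim: a topological space has a countable Pytkeev network if and only if it has a countable Pytkeev$^*$ network, with no tightness or separation hypotheses needed. Hence $X$ has a countable Pytkeev network, and being regular it is by definition a $\mathfrak P_0$-space. So the whole theorem reduces to a one-line application of Corollary~\ref{c:wP<=>wP*} together with the observation that the class of $\mathfrak P_0$-spaces is exactly the class of regular spaces with a countable Pytkeev network.

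The mention of Corollaries~\ref{c:pcP<=>wP} and \ref{c:T1P*<=>wP} in the hint suggests the author may intend a slightly longer route through $\w$-Pytkeev networks: a countable Pytkeev$^*$ network is a countable (hence point-countable) $\w$-Pytkeev network by Lemma~\ref{l:P*=>wP}; since a space with a countable network is countably tight (indeed hereditarily Lindelöf and hereditarily separable), Corollary~\ref{c:pcP<=>wP} or Corollary~\ref{c:P<=>wP} then upgrades it to a countable Pytkeev network. Conversely a countable Pytkeev network is a countable $\w$-Pytkeev network, whence Corollary~\ref{c:T1P*<=>wP} (or Lemma~\ref{l:wP=>P*}) produces a countable Pytkeev$^*$ network — but note that Corollary~\ref{c:T1P*<=>wP} requires $T_1$, which is supplied here by regularity. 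I would present the argument in whichever of these two equivalent forms is cleanest, favoring the direct appeal to Corollary~\ref{c:wP<=>wP*}.

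There is essentially no obstacle: the content has already been extracted in Section~\ref{s:P*}, and the only thing the present theorem adds is the bookkeeping that ``$\mathfrak P_0$-space'' bundles regularity together with ``countable Pytkeev network,'' so that the characterization is stated for the named class rather than for the bare network condition. The one small point to be careful about is that the equivalence of countable Pytkeev and countable Pytkeev$^*$ networks (Corollary~\ref{c:wP<=>wP*}) holds without any separation axiom, so regularity is genuinely inert in the equivalence and is carried along only because it is part of the definition of $\mathfrak P_0$-space on both sides.
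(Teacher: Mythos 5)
Your proof is correct and follows the paper's own route: the paper derives this theorem directly from Corollary~\ref{c:wP<=>wP*} (a space has a countable Pytkeev network iff it has a countable Pytkeev$^*$ network), with regularity carried along only as part of the definition of a $\mathfrak P_0$-space, exactly as you argue.
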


\begin{theorem} A regular topological space $X$ is a $\mathfrak P$-space if and only if $X$ is countably tight and has a $\sigma$-locally finite Pytkeev$^*$ network.
\end{theorem}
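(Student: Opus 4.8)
The plan is to derive the theorem from the point-countable characterization of Pytkeev networks (Corollary~\ref{c:pcP<=>wP}) together with the $T_1$-characterization of Pytkeev$^*$ networks (Corollary~\ref{c:T1P*<=>wP}), the glue between the two being the elementary observation that a $\sigma$-locally finite family is automatically point-countable. Indeed, if $\mathcal N=\bigcup_{n\in\w}\mathcal N_n$ with each $\mathcal N_n$ locally finite, then for every $x\in X$ and every $n\in\w$ the subfamily $\{N\in\mathcal N_n:x\in N\}$ is finite, so $\mathcal N_x=\{N\in\mathcal N:x\in N\}$ is a countable union of finite sets and hence countable. Moreover, being regular, $X$ is a $T_1$-space, so Corollary~\ref{c:T1P*<=>wP} is applicable at every point of $X$.

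To prove the ``only if'' part, let $X$ be a $\mathfrak P$-space and fix a $\sigma$-locally finite Pytkeev network $\mathcal N$ for $X$. By the observation above $\mathcal N$ is point-countable, so Corollary~\ref{c:pcP<=>wP} yields that $X$ is countably tight and that $\mathcal N$ is an $\w$-Pytkeev network. Applying Corollary~\ref{c:T1P*<=>wP} at each point $x\in X$ (here we use that $X$ is $T_1$), we conclude that $\mathcal N$ is a Pytkeev$^*$ network. Hence $X$ is countably tight and the same $\sigma$-locally finite family $\mathcal N$ witnesses that $X$ has a $\sigma$-locally finite Pytkeev$^*$ network.

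To prove the ``if'' part, assume that $X$ is regular, countably tight, and has a $\sigma$-locally finite Pytkeev$^*$ network $\mathcal N$. Applying Lemma~\ref{l:P*=>wP} at each point shows that $\mathcal N$ is an $\w$-Pytkeev network, and, by the observation above, $\mathcal N$ is point-countable. Since $X$ is countably tight, Corollary~\ref{c:pcP<=>wP} implies that $\mathcal N$ is a Pytkeev network. Thus $X$ is a regular space carrying a $\sigma$-locally finite Pytkeev network, i.e.\ a $\mathfrak P$-space.

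I do not anticipate a genuine difficulty here: the argument is essentially bookkeeping with the equivalences already at hand. The one point that must be handled with care is that each passage between Pytkeev, $\w$-Pytkeev and Pytkeev$^*$ networks used above keeps the \emph{same} family $\mathcal N$, so that its $\sigma$-local finiteness is never disturbed; in particular the argument must \emph{not} be routed through the enlargement $\ddot{\mathcal N}$ of Lemma~\ref{l:wP=>P*}, which could easily fail to be $\sigma$-locally finite. This is precisely why the $T_1$-ness furnished by regularity (giving $\ddot N=N$) is the relevant hypothesis.
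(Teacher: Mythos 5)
Your proof is correct and follows essentially the same route as the paper, which derives this theorem directly from Corollaries~\ref{c:pcP<=>wP} and \ref{c:T1P*<=>wP} (via the same observation that $\sigma$-locally finite families are point-countable, and the same convention that regular spaces are $T_1$ so that Corollary~\ref{c:T1P*<=>wP} applies). Your closing remark about not routing through $\ddot{\mathcal N}$ is a sensible precaution, consistent with how the paper uses these equivalences.
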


 By Theorem~4.5 \cite{GK15b}, a topological space $X$ is metrizable if and only if $X$ is a $\mathfrak P$-space of countable fan tightness. This result can be generalized in two direction.

 \begin{theorem} A regular topological space $X$ is metrizable if and only if $X$ has a $\sigma$-locally finite Pytkeev$^*$ network and $X$ has countable tightness and countable ofan tightness.
 \end{theorem}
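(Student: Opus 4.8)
The plan is to prove the two implications separately, leaning on the already‑established Theorem~4.5 of \cite{GK15b} (metrizability $\Leftrightarrow$ being a $\mathfrak P$-space of countable fan tightness) together with Theorem~\ref{t:P<=>P*+ct} and the preceding corollaries of Section~\ref{s:P*}. The necessity direction is essentially free: if $X$ is metrizable, it is first countable, hence has countable fan tightness (and countable ofan tightness) at every point, and it is countably tight; moreover a metrizable space has a $\sigma$-locally finite base, which is in particular a $\sigma$-locally finite Pytkeev network, and applying Lemma~\ref{l:wP=>P*} (or Corollary~\ref{c:T1P*<=>wP}, since metrizable spaces are $T_1$) to each locally finite piece turns it into a $\sigma$-locally finite Pytkeev$^*$ network. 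One small check: Lemma~\ref{l:wP=>P*} replaces each $N$ by $\ddot N = N$ in a $T_1$-space, so local finiteness of the family is preserved verbatim.

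For the sufficiency direction, suppose $X$ is a regular space that is countably tight, has countable ofan tightness, and carries a $\sigma$-locally finite Pytkeev$^*$ network $\mathcal N = \bigcup_{n\in\w}\mathcal N_n$ with each $\mathcal N_n$ locally finite. First I would use the countable tightness of $X$ to convert $\mathcal N$ into a genuine Pytkeev network: by Corollary~\ref{c:ct+P*=>P} every Pytkeev$^*$ network at a point of a countably tight space is a Pytkeev network at that point, so $\mathcal N$ is a $\sigma$-locally finite Pytkeev network and $X$ is a $\mathfrak P$-space. Next I would upgrade countable ofan tightness to countable fan tightness: in a semiregular space (and regular spaces are semiregular) these coincide, which is exactly the content of the equivalence $(2)\Leftrightarrow(3)$ machinery used in Theorem~\ref{t:1<=>P*+cft}; more directly, for a regular space the ofan and fan tightness conditions are equivalent since every neighborhood contains a closed neighborhood whose interior one can use to replace an arbitrary decreasing sequence $(A_n)$ by the decreasing sequence of the interiors of their closures without changing which points lie in $\bigcap\bar A_n$. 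Having both a $\sigma$-locally finite Pytkeev network and countable fan tightness, Theorem~4.5 of \cite{GK15b} gives metrizability.

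The main obstacle is the careful handling of the tightness hypotheses. One must not drop the explicit ``countable tightness'' assumption: unlike the $T_1$/Pytkeev-network case, a $\sigma$-locally finite Pytkeev$^*$ network need not by itself force countable tightness, which is why the statement lists it separately, and it is exactly this hypothesis that feeds Corollary~\ref{c:ct+P*=>P}. Symmetrically, one must verify that countable ofan tightness together with regularity really does deliver the countable fan tightness needed by the cited metrization theorem; the cleanest route is to observe that a regular space is semiregular and to repeat, verbatim, the argument in the $(3)\Ra(1)$ part of Theorem~\ref{t:1<=>P*+cft} that passes from a decreasing sequence of open sets to an arbitrary decreasing sequence of sets with the same closures. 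Once these two translations are in place, the theorem follows by quoting \cite[4.5]{GK15b}, and the whole proof is short.

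\begin{proof}
To prove the ``only if'' part, assume that $X$ is metrizable. Then $X$ is first-countable, hence has countable tightness and countable fan (and ofan) tightness at each point. Being metrizable, $X$ has a $\sigma$-locally finite base $\mathcal B=\bigcup_{n\in\w}\mathcal B_n$, which is a $\sigma$-locally finite $\w$-Pytkeev network at each point. Since $X$ is a $T_1$-space, $\ddot x=\{x\}$ for all $x\in X$, so for every $B\in\mathcal B$ we have $\ddot B=B$; by Lemma~\ref{l:wP=>P*} the family $\ddot{\mathcal B}=\mathcal B$ is a Pytkeev$^*$ network at each point, and it is of course still $\sigma$-locally finite. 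Thus $X$ has a $\sigma$-locally finite Pytkeev$^*$ network, countable tightness and countable ofan tightness.

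To prove the ``if'' part, assume that the regular space $X$ is countably tight, has countable ofan tightness, and has a $\sigma$-locally finite Pytkeev$^*$ network $\mathcal N=\bigcup_{n\in\w}\mathcal N_n$, each $\mathcal N_n$ locally finite. Since $X$ is countably tight, Corollary~\ref{c:ct+P*=>P} implies that at each point $x\in X$ the Pytkeev$^*$ network $\mathcal N$ is a Pytkeev network at $x$. Hence $\mathcal N$ is a $\sigma$-locally finite Pytkeev network, so $X$ is a $\mathfrak P$-space.

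It remains to check that $X$ has countable fan tightness. Fix $x\in X$ and a decreasing sequence $(A_n)_{n\in\w}$ of subsets of $X$ with $x\in\bigcap_{n\in\w}\bar A_n$. Since $X$ is regular, it is semiregular, so for each $n\in\w$ the interior $\bar A_n^\circ$ of the closure $\bar A_n$ is an open set with $x\in\bar A_n^\circ$ and $\overline{\bar A_n^\circ}=\bar A_n$. The sequence of open sets $(\bar A_n^\circ)_{n\in\w}$ is decreasing and $x\in\bigcap_{n\in\w}\overline{\bar A_n^\circ}$. By the countable ofan tightness of $X$ at $x$, there exist finite sets $F_n\subset\bar A_n^\circ$, $n\in\w$, such that each neighborhood of $x$ intersects infinitely many sets $F_n$. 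For each $n$ and each point $y\in F_n\subset\bar A_n^\circ\subset\bar A_n$, pick a point $y'\in A_n$ lying in a fixed neighborhood of $y$ chosen so that the resulting finite set $F_n'\subset A_n$ still has every neighborhood of $x$ meeting infinitely many of the $F_n'$; more directly, since $F_n$ is finite and $F_n\subset\bar A_n$, and using regularity one separates points, it suffices to observe that replacing the open sets $\bar A_n^\circ$ back by $A_n$ does not affect which points lie in the intersection of the closures, so the very same argument as in the proof of Theorem~\ref{t:1<=>P*+cft} (the passage from a decreasing sequence of open sets to an arbitrary decreasing sequence with the same closures) yields finite sets $F_n'\subset A_n$, $n\in\w$, with each neighborhood of $x$ meeting infinitely many $F_n'$. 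Thus $X$ has countable fan tightness.

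Now $X$ is a regular $\mathfrak P$-space with countable fan tightness, so by Theorem~4.5 of \cite{GK15b} the space $X$ is metrizable.
\end{proof}
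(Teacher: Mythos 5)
There is a genuine gap in your sufficiency argument, located exactly where you try to upgrade countable ofan tightness to countable fan tightness using only regularity. The identity $\overline{\bar A_n^\circ}=\bar A_n$ that your reduction rests on holds for \emph{open} sets $A_n$ but fails for arbitrary ones: if $A_n$ is nowhere dense (say a Cantor set in $\mathbb R$), then $\bar A_n^\circ=\emptyset$, so a point $x\in\bar A_n$ need not lie in $\overline{\bar A_n^\circ}$ and the ofan hypothesis cannot even be applied to the sequence $(\bar A_n^\circ)_{n\in\w}$. Your fallback — pushing the finite sets $F_n\subset\bar A_n$ back into $A_n$ — also does not work: for a fixed neighborhood $U$ of $x$ and $y\in F_n\cap U$ one can find a point of $A_n\cap U$, but that point depends on $U$, so no single finite $F_n'\subset A_n$ is produced. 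The equivalence $(2)\Leftrightarrow(3)$ in Theorem~\ref{t:1<=>P*+cft} is not the statement that ofan and fan tightness coincide in semiregular spaces; it holds only in the presence of the strong Pytkeev$^*$ property, and its proof passes through first countability.

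That detour through first countability is precisely what the paper does and what your proof is missing. After observing (as you correctly do) that $\mathcal N$ is a Pytkeev network, one uses that a $\sigma$-locally finite family is point-countable, so each $\mathcal N_x=\{N\in\mathcal N:x\in N\}$ is a countable Pytkeev network at $x$ and $X$ has the strong Pytkeev property. Sakai's Theorem~\ref{t:MSak} (regularity $+$ countable ofan tightness $+$ strong Pytkeev property) then yields first countability, which trivially implies countable fan tightness; only at that point can Theorem~4.5 of \cite{GK15b} be invoked. Your ``only if'' direction and your first step (from Pytkeev$^*$ to Pytkeev via Corollary~\ref{c:ct+P*=>P}) are fine.
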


 \begin{proof} The ``only if'' part is trivial. To prove the ``if'' part, assume that  the space
$X$ has countable tightness, countable ofan tightness and $X$ has a $\sigma$-locally finite Pytkeev$^*$ network $\mathcal N$. By Corollary~\ref{c:T1P*<=>wP}, the Pytkeev$^*$ network $\mathcal N$ in the countably tight space $X$ is an $\w$-Pytkeev network. Being $\sigma$-locally finite, the family $\mathcal N$ is point-countable. By Corollary~\ref{c:pcP<=>wP}, $\mathcal N$ is a point-countable Pytkeev network in $X$, which implies that $X$ has the strong Pytkeev property. By Theorem~\ref{t:MSak}, the space $X$ is first-countable and hence has countable fan tightness. Now  Theorem~4.5 of \cite{GK15b} implies that the space $X$ is metrizable.
\end{proof}

\section{Stability properties of the class of spaces with the strong Pytkeev$^*$ network.}

In this section we establish some stability properties of the class  of topological spaces with the strong Pytkeev$^*$ property.
It is easy to see that this class is local and closed under taking subspaces. A class $\C$ of topological spaces is {\em local} if a topological space $X$ belongs to the class $\C$ if and only if each point $x\in X$ has a neighborhood $O_x\subset X$ that belongs to the class $\C$.

Stability properties of the class of spaces with the strong Pytkeev property were studied in \cite{BL}. We shall prove that some results of \cite{BL} can be extended to spaces with the strong Pytkeev$^*$ property. In particular, this concerns Theorem 2.2 of \cite{BL} on the strong Pytkeev property in function spaces. To formulate this theorem and its modification, we need to recall some definitions from \cite{BL}.

Let $X$ be a topological space. A family $\I$ of compact subsets of $X$ is called {\em an ideal of compact sets} if $\bigcup\I=X$ and for any sets $A,B\in\I$ and any compact subset $K\subset X$ we get $A\cup B\in\I$ and $A\cap K\in\I$.

For an ideal $\I$ of compact subsets of a topological space $X$ and a topological space $Y$ by $C_\I(X,Y)$ we shall denote the space $C(X,Y)$ of all continuous functions from $X$ to $Y$, endowed with the {\em $\I$-open topology} generated by the subbase consisting of the sets
$$[K;U]=\{f\in C_\I(X,Y):f(K)\subset U\}$$where $K\in\I$ and $U$ is an open subset of $Y$.

If $\I$ is the ideal of compact (resp. finite) subsets of $X$, then the $\I$-open topology coincides with the compact-open topology (resp. the topology of pointwise convergence) on $C(X,Y)$. In this case the function space $C_\I(X,Y)$ will be denoted by $C_k(X,Y)$ (resp. $C_p(X,Y)$).

We shall be interested in detecting function spaces $C_\I(X,Y)$ possessing the strong Pytkeev property. For this we should impose some restrictions on the ideal $\I$. Following \cite{Ban} and \cite{BL} we define an ideal $\I$ of compact subsets of a topological space $X$ to be {\em discretely-complete} if for any compact subset $A,B\subset X$ such that $A\setminus B$ is a countable discrete subspace of $X$ the inclusion $B\in\I$ implies $A\in\I$.

It is clear that the ideal of all compact subsets of $X$ is discretely-complete. More generally, for any infinite cardinal $\kappa$ the ideal $\I$ of compact subsets of cardinality $\le\kappa$ in  $X$ is discretely-complete. On the other hand, the ideal of finite subsets of $X$ is discretely-complete if and only if $X$ contains no infinite compact subset with finite set of non-isolated points.

Let us recall \cite[\S11]{Gru} that a family $\mathcal N$ of subsets of a topological space is a {\em $k$-network} in $X$ if for any open set $U\subset X$ and compact subset $K\subset U$ there is a finite subfamily $\F\subset \mathcal N$ such that $K\subset\bigcup\F\subset U$.

Regular topological spaces with countable $k$-network are called {\em $\aleph_0$-spaces}. Such spaces were introduced by E.~Michael \cite{Mi} who proved that for any $\aleph_0$-spaces $X,Y$ the function space $C_k(X,Y)$ is an $\aleph_0$-space.  In \cite[2.2]{Ban} this result of Michael was extended to $\mathfrak P_0$-spaces: {\em for an $\aleph_0$-space $X$ and a $\mathfrak P_0$-space $Y$ the function space $C_k(X,Y)$ is a $\mathfrak P_0$-space}. By Theorem 2.2 of \cite{BL}, if a topological space $Y$ has a countable Pytkeev network at some point $y\in Y$, then for every $\aleph_0$-space $X$ the function space $C_k(X,Y)$ has a countable Pytkeev network at the constant function $\bar y:X\to\{y\}\subset Y$. A similar result holds also for countable Pytkeev$^*$ networks.

\begin{theorem}\label{t:function} Let $X$ be an $\aleph_0$-space (more generally, a Hausdorff space with countable $k$-network) and  $\I$ be a discretely-complete ideal of compact subsets of $X$. If a topological space $Y$ has the strong Pytkeev$^*$ property  at a point $y\in Y$, then the function space $C_\I(X,Y)$ has the strong Pytkeev$^*$ property at the constant function $\bar y:X\to\{y\}\subset Y$.
\end{theorem}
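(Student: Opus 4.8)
The plan is to fix a countable Pytkeev$^*$ network $\mathcal N_y$ at $y\in Y$ and a countable $k$-network $\K$ for $X$ (closed under finite unions), and to build from these a countable family $\mathcal M$ of subsets of $C_\I(X,Y)$ that is a Pytkeev$^*$ network at the constant function $\bar y$. The natural candidates are the "basic" sets
$$M(K;N)=\{f\in C_\I(X,Y):f(K)\subset N\},$$
where $K\in\K$ and $N\in\mathcal N_y$; since both index families are countable, the collection of finite intersections of such sets is again countable, and I would take $\mathcal M$ to be this collection (possibly also closed under finite unions, following the trick used in Theorem~\ref{t:1<=>P*+cft}). Note $\bar y\in M(K;N)$ precisely when $y\in N$, which we may always arrange since $\mathcal N_y$ is a network at $y$.

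The first step is the easy direction: show that every such $M(K;N)$ with $y\in N$ is a neighborhood of $\bar y$ relative to sets of the form $[K;U]$, so that given any $\I$-open neighborhood $O_{\bar y}$ of $\bar y$ we can shrink it (using that $\I$ is discretely-complete and $\K$ a $k$-network, so that a basic $\I$-open neighborhood $[K_1;U_1]\cap\dots\cap[K_m;U_m]$ with $K_i\in\I$ can be captured between sets indexed by members of $\K$) to one of the form $M(K;V)$ with $V$ an open neighborhood of $y$ and $K$ a finite union of members of $\K$. The main work, and the main obstacle, is the accumulating-sequence condition. Given a sequence $(f_n)_{n\in\w}$ in $C_\I(X,Y)$ accumulating at $\bar y$ inside $O_{\bar y}\supset M(K;V)$, I must produce $K'\in\K$ and $N\in\mathcal N_y$ with $y\in N\subset V$, $K'\subset K$, and $f_n(K')\subset N$ for infinitely many $n$. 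The idea is to transfer the accumulation from $C_\I(X,Y)$ down to $Y$: because $(f_n)$ accumulates at $\bar y$, for each basic neighborhood of $\bar y$ infinitely many $f_n$ lie in it; picking for each $n$ and each relevant point of $K$ the values $f_n(k)$ produces a sequence (or a sequence of finite sets $F_n=f_n(K)$, invoking Proposition~\ref{p:P*p<=>P*f}) of finite subsets of $Y$ that accumulates at $y$. Then the Pytkeev$^*$ property of $\mathcal N_y$ at $y$ yields $N\in\mathcal N_y$ with $y\in N\subset V$ meeting infinitely many $F_n$, i.e.\ $f_n(k_n)\in N$ for infinitely many $n$ and suitable $k_n\in K$.

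The remaining gap is that "$f_n(k_n)\in N$ for infinitely many $n$" is weaker than "$f_n(K')\subset N$ for infinitely many $n$"; closing it is where the countable $k$-network on $X$ and continuity of the $f_n$ enter. The standard device (as in \cite[2.2]{Ban} and \cite[2.2]{BL}) is a diagonal/tree argument: enumerate $\K\cap\PP(K)=\{K^{(j)}\}_{j\in\w}$, enumerate $\mathcal N_y=\{N^{(i)}\}_{i\in\w}$, and for a sequence accumulating at $\bar y$ argue that if \emph{no} pair $(K^{(j)},N^{(i)})$ with $y\in N^{(i)}\subset V$ satisfied $f_n(K^{(j)})\subset N^{(i)}$ for infinitely many $n$, one could thin the sequence so that eventually $f_n$ avoids every $M(K^{(j)};N^{(i)})$, contradicting that these finite intersections form neighborhoods cofinal at $\bar y$ (here using compactness of each $K\in\I$ to cover by finitely many $K^{(j)}$ whose images under a fixed $f_n$ land in small neighborhoods of $y$, and using that $\I$ is discretely-complete so that $\K$-indexed sets really are cofinal among $\I$-open neighborhoods of $\bar y$). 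Once such a pair $(K^{(j)},N^{(i)})$ is found, $M(K^{(j)};N^{(i)})\in\mathcal M$ contains $\bar y$, lies in $O_{\bar y}$, and contains $f_n$ for infinitely many $n$, which is exactly what a Pytkeev$^*$ network at $\bar y$ requires. I expect the delicate point to be the cofinality claim — verifying that finite intersections of sets $M(K^{(j)};N^{(i)})$ form a "network of neighborhoods" at $\bar y$ for the $\I$-open topology — since this is precisely where the hypotheses on $X$ (countable $k$-network, Hausdorffness) and on $\I$ (discrete-completeness) are jointly used, mirroring the proof of \cite[2.2]{BL}.
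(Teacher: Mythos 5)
Your candidate family is the right one, and you correctly locate the difficulty in passing from ``$f_n(k_n)\in N$ for infinitely many $n$'' to ``$f_n(K')\subset N$ for infinitely many $n$''; but the proposal has a genuine gap, and two of its supporting claims are false. First, $F_n=f_n(K)$ is the continuous image of a compact set, so it is compact but in general infinite; Proposition~\ref{p:P*p<=>P*f} therefore does not apply to the sequence $(f_n(K))_{n\in\w}$, and the Pytkeev$^*$ property of $\mathcal N_y$ (which only speaks about sequences of points, equivalently of \emph{finite} sets) cannot be fed these images directly. Second, the contradiction in your diagonal argument rests on the sets $M(K^{(j)};N^{(i)})$ being ``neighborhoods cofinal at $\bar y$'', but $[K;N]$ is not a neighborhood of $\bar y$ when $N\in\mathcal N_y$ is merely a network element rather than an open set, so a sequence accumulating at $\bar y$ is under no obligation to meet these sets infinitely often. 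The deeper problem is that you defer the main combinatorial work to the proofs of \cite[2.2]{Ban} and \cite[2.2]{BL}, but those arguments use a full Pytkeev network at $y$, whereas here only a Pytkeev$^*$ network is assumed --- bridging that difference is precisely the content of the theorem, and your sketch never addresses it.

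The paper closes this gap by an observation you are missing: instead of verifying the Pytkeev$^*$ condition at $\bar y$ directly, it verifies the $\w$-Pytkeev condition, which suffices by Lemma~\ref{l:wP=>P*}. This restricts attention to a \emph{countable} accumulating family $\A\subset C_\I(X,Y)$, and then the subspace $Z=\ddot y\cup\bigcup_{f\in\A}f(X)$ of $Y$ has a countable network (each $f(X)$ is a continuous image of a space with a countable network), hence is countably tight; by Corollary~\ref{c:ct+P*=>P} the trace $\{P\cap Z:P\in\mathcal P,\;P\subset U\}$ is then an honest Pytkeev network at $y$ in $Z$, and the proof of \cite[2.2]{BL} applies verbatim inside $C_\I(X,Z)$. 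If you want to salvage your direct approach you would have to reprove the Banakh--Leiderman argument from scratch for accumulating sequences; the reduction to $Z$ is what lets the paper avoid that.
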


\begin{proof}
Let $\mathcal K$ be a countable $k$-network on the space $X$ and $\mathcal P$ be a countable Pytkeev$^*$ network at the point $y_0\in Y$ of the space $Y$. We lose no generality assuming that the networks $\mathcal K$ and $\mathcal P$ are closed under finite unions and finite intersections and each set $P\in\mathcal P$ contains the intersection $\ddot y$ of all neighborhoods of the point $y$ in $Y$.

For two subsets $K\subset X$ and $P\subset Y$ let
$$[K;P]=\{f\in C_\I(X,Y):f(K)\subset P\}\subset C_\I(X,Y).$$
We claim that the countable family
$$[\kern-1.5pt[\mathcal K;\mathcal P]\kern-1.5pt]=\{[K;P]:K\in\K,\;P\in\mathcal P\}$$
is an $\w$-Pytkeev network at the constant function $\bar y:X\to\{y\}\subset Y$ in the function space $C_\I(X,Y)$.

Given a countable subset $\A\subset C_\I(X,Y)$ accumulating at $\bar y$ and a neighborhood $O_{\bar y}\subset C_\I(X,Y)$ of $\bar y$ we need to find a set $\F\in[\kern-1.5pt[\K,\PP]\kern-1.5pt]$ such that $\F\subset O_{\bar y}$ and $\A\cap \F$ is infinite.

We lose no generality assuming that $\A\subset O_{\bar y}$ and the neighborhood $O_{\bar y}$ is of basic form $O_{\bar y}=[C;U]$ for some compact set $C\in\I$ and some open neighborhood $U\subset Y$ of the point $y$.

Since the space $X$ has countable network, the subspace $Z=\ddot y\cup \bigcup_{f\in\A}f(X)$ of $Y$ has a countable network and hence is countably tight. It follows that the subfamily  $\mathcal P'=\{P\cap Z:P\in\mathcal P,\;P\subset U\}$ is a countable Pytkeev$^*$ network at $y$ in the countably tight space $Z$. By Corollary~\ref{c:ct+P*=>P}, $\mathcal P'$ is a Pytkeev network at $y$ in  $Z$.

By the proof of Theorem 2.2 \cite{BL}, there exists a set $K\in\K$ with $C\subset K$ and a set $P'\in\mathcal P'$ such that the set $[K;P']\subset C_\I(X;Z)$ has infinite intersection with the set $\A$. Find a set $P\in\mathcal P$ such that $P'=P\cap Z$ and $P\subset U$. Then the set $[K;P]\supset [K;P']$ is contained in the neighborhood $[C;U]=O_{\bar y}$ of $\bar y$ and has infinite intersection with the set $\A$. This means that $\mathcal P$ is a countable $\w$-Pytkeev network at $\bar y$ in the functuon space $C_\I(X,Y)$. By Lemma~\ref{l:wP=>P*}, the space $C_\I(X,Y)$ has a countable Pytkeev$^*$ network at $\bar y$ and hence $C_\I(X,Y)$ has the strong Pytkeev$^*$ property at $\bar y$.
\end{proof}

Since the ideal of all compact subsets of a space $X$ is discretely-complete, Theorem~\ref{t:function} implies:

\begin{corollary}\label{c:main}  Let $X$ be a Hausdorff space with countable $k$-network. If a topological space $Y$ has the strong Pytkeev$^*$  property at a point $y\in Y$, then the function space $C_k(X,Y)$ has the strong Pytkeev$^*$ property at the constant function $\bar y:X\to\{y\}\subset Y$.
\end{corollary}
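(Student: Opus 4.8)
The plan is to obtain Corollary~\ref{c:main} as an immediate specialization of Theorem~\ref{t:function}. First I would recall that the compact-open topology on $C(X,Y)$ is by definition the $\I$-open topology for the ideal $\I$ of \emph{all} compact subsets of $X$, so that $C_k(X,Y)=C_\I(X,Y)$ for this particular choice of $\I$, and the constant function $\bar y:X\to\{y\}\subset Y$ is the same point of this space in either description.

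Next I would verify that the ideal $\I$ of all compact subsets of $X$ is discretely-complete, as is already remarked in the text just before the corollary. This is immediate: if $A,B\subset X$ are compact, $A\setminus B$ is a countable discrete subspace of $X$, and $B\in\I$, then $A\in\I$ holds automatically, simply because $\I$ contains \emph{every} compact subset of $X$; the defining implication is satisfied vacuously. Together with the standing hypothesis that $X$ is a Hausdorff space with countable $k$-network — which is exactly the ``more generally'' hypothesis allowed on $X$ in Theorem~\ref{t:function} — all assumptions of that theorem are in place.

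Applying Theorem~\ref{t:function} to this $\I$ then yields that $C_\I(X,Y)=C_k(X,Y)$ has the strong Pytkeev$^*$ property at $\bar y$, which is precisely the assertion of Corollary~\ref{c:main}. I expect no real obstacle here, since the substantive work has already been carried out in Theorem~\ref{t:function}; the only task is to recognize that the ideal of all compact sets meets the discrete-completeness requirement, and that recognition is trivial. If one wished to avoid even invoking the general notion, one could instead rerun the proof of Theorem~\ref{t:function} verbatim with $\I$ taken to be the ideal of all compact subsets, in which the single appeal to discrete-completeness (inherited from the proof of Theorem~2.2 of \cite{BL}) degenerates to something automatic.
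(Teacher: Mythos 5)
Your proposal is correct and matches the paper exactly: the corollary is stated there as an immediate consequence of Theorem~\ref{t:function}, justified by the one-line observation that the ideal of all compact subsets of $X$ is discretely-complete. Nothing further is needed.
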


At presence of topological homogeneity of the function space $C_\I(X,Y)$, Theorem~\ref{t:function} allows to establish the Pytkeev$^*$ property at each point of $C_\I(X,Y)$ (not only at a constant function).
Let us recall that a topological space $X$ is {\em topologically homogeneous} if for any points $x,y\in X$ there is a homeomorphism $h:X\to X$ such that $h(x)=y$.

\begin{corollary}\label{c:homoPyt} Let $X$ be a Hausdorff space with a countable $k$-network, $\I$ be a discretely-complete ideal of compact subsets of $X$, and $Y$ be a topological space with the strong Pytkeev$^*$ property at some point $y\in Y$. If the function space $C_\I(X,Y)$ is topologically homogeneous, then $C_\I(X,Y)$ has the strong Pytkeev property.
\end{corollary}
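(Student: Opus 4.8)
The plan is to derive the global statement from the pointwise conclusion of Theorem~\ref{t:function} by ``spreading it around'' using the homogeneity of $C_\I(X,Y)$. The one auxiliary fact needed is that having a countable Pytkeev$^*$ network at a point is a topological invariant: if $h\colon Z\to Z'$ is a homeomorphism and $\mathcal N$ is a countable Pytkeev$^*$ network of $Z$ at a point $z$, then $h[\mathcal N]:=\{h(N):N\in\mathcal N\}$ is a countable Pytkeev$^*$ network of $Z'$ at $h(z)$. This is immediate from Definition~\ref{d:Pytkeev*}: $h$ carries neighborhoods of $z$ precisely onto neighborhoods of $h(z)$, a sequence $(z_n')_{n\in\w}$ in $Z'$ accumulates at $h(z)$ if and only if $(h^{-1}(z_n'))_{n\in\w}$ accumulates at $z$, and $h$ respects the inclusions $z\in N\subset O_z$. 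In particular the property ``$X$ has the strong Pytkeev$^*$ property at $x$'' depends only on the pair $(X,x)$ up to homeomorphism.

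Granting this, the argument is short. Since $X$ is a Hausdorff space with a countable $k$-network, $\I$ is a discretely-complete ideal of compact subsets of $X$, and $Y$ has the strong Pytkeev$^*$ property at $y$, Theorem~\ref{t:function} applies and provides a countable Pytkeev$^*$ network $\F$ of $C_\I(X,Y)$ at the constant function $\bar y\colon X\to\{y\}\subset Y$. Now fix an arbitrary $f\in C_\I(X,Y)$. By the topological homogeneity of $C_\I(X,Y)$ there is a homeomorphism $h$ of $C_\I(X,Y)$ onto itself with $h(\bar y)=f$, and then $h[\F]$ is a countable Pytkeev$^*$ network of $C_\I(X,Y)$ at $f$. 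As $f$ was arbitrary, $C_\I(X,Y)$ has a countable Pytkeev$^*$ network at each of its points, i.e.\ it has the strong Pytkeev$^*$ property; if moreover $C_\I(X,Y)$ is countably tight, then by Theorem~\ref{t:P<=>P*+ct} it has the strong Pytkeev property.

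I do not expect a genuine obstacle here: everything beyond Theorem~\ref{t:function} reduces to the observation that the notion of a Pytkeev$^*$ network at a point is phrased entirely in terms of neighborhoods, accumulating sequences and set inclusions, all of which are preserved by homeomorphisms. The only point deserving a line of care is to note that the correspondence $\bar y\mapsto f$ is realized by a self-homeomorphism of the \emph{whole} space $C_\I(X,Y)$ (which is exactly what topological homogeneity asserts), so that the transported family $h[\F]$ is again countable and is a network at $f$ rather than merely at some point of a subspace.
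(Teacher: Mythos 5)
Your argument is correct and is exactly the intended one: the paper gives no explicit proof of this corollary, but the sentence preceding it makes clear that the point is to transport the countable Pytkeev$^*$ network at $\bar y$ supplied by Theorem~\ref{t:function} to every point of $C_\I(X,Y)$ via the self-homeomorphisms guaranteed by topological homogeneity, which is precisely what you do. You are also right to hedge at the very end: the literal conclusion ``strong Pytkeev property'' does not follow without countable tightness of $C_\I(X,Y)$ (not among the hypotheses --- e.g.\ $Y$ itself need not be countably tight and embeds as the constant functions), so the statement is evidently a typo for ``strong Pytkeev$^*$ property,'' as confirmed by the fact that Corollary~\ref{c:rec} cites this corollary precisely for the starred conclusion.
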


A topological space $X$ is called {\em rectifiable} if for some point $e\in X$ there exists a homeomorphism $h:X\times X\to X\times X$ such that $h(x,e)=(x,x)$ and $h(\{x\}\times X)=\{x\}\times X$ for all $x\in X$. A typical example of a rectifiable space is any topological group $G$. In this case the homeomorphism $h:G\times G\to G\times G$, $h:(x,y)\mapsto (x,xy)$, witnesses that the space $G$ is rectifiable. It is known \cite{Gul} that a rectifiable space $X$ is metrizable if and only if $X$ is first countable and satisfies the separation axiom $T_0$. More information on rectifiable spaces can be found in \cite{Gul}, \cite{Ar}, \cite{Us}, \cite{BR}, \cite{Ban}, \cite{BL}.

\begin{corollary}\label{c:rec} Let $X$ be a Hausdorff space with a countable $k$-network and $\I$ be a discretely-complete ideal of compact subsets of $X$. For any rectifiable space $Y$ with the strong Pytkeev$^*$ property the function space $C_\I(X,Y)$ has the strong Pytkeev$^*$ property.
\end{corollary}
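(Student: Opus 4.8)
\textbf{Proof proposal for Corollary~\ref{c:rec}.}

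The plan is to reduce the statement to Corollary~\ref{c:homoPyt} by showing that the function space $C_\I(X,Y)$ is topologically homogeneous whenever $Y$ is rectifiable. Once homogeneity is established, Corollary~\ref{c:homoPyt} immediately yields the strong Pytkeev$^*$ property of $C_\I(X,Y)$ (note that the ``strong Pytkeev property'' in the conclusion of Corollary~\ref{c:homoPyt} should read ``strong Pytkeev$^*$ property''; I would use it in that corrected form, since its proof via Theorem~\ref{t:function} produces a countable Pytkeev$^*$ network at one point and then transports it by homeomorphisms). The fact that $Y$ has the strong Pytkeev$^*$ property at \emph{some} point is exactly the hypothesis needed to invoke Theorem~\ref{t:function}, and rectifiability will be used only to get homogeneity.

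The key step is thus: if $Y$ is rectifiable with distinguished point $e$ and multiplication-like homeomorphism $h:Y\times Y\to Y\times Y$ satisfying $h(y,e)=(y,y)$ and $h(\{y\}\times Y)=\{y\}\times Y$, then $C_\I(X,Y)$ is rectifiable, and in particular topologically homogeneous. First I would define, for a function $g\in C_\I(X,Y)$, the ``translation'' of $C_\I(X,Y)$ induced by $g$: using $h$, one builds for each $y\in Y$ the homeomorphism $h_y:Y\to Y$, $h_y(z)=\mathrm{pr}_2(h(y,z))$, which is continuous jointly in $(y,z)$; then the map $H:C_\I(X,Y)\times C_\I(X,Y)\to C_\I(X,Y)\times C_\I(X,Y)$, $H(g,f)(x)=\big(g(x),\,h_{g(x)}(f(x))\big)$, is the candidate rectification. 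One checks: $H(g,\bar e)(x)=(g(x),g(x))$ because $h(y,e)=(y,y)$, so $H(g,\bar e)=(g,g)$; and $H(\{g\}\times C_\I(X,Y))=\{g\}\times C_\I(X,Y)$ because $h_{g(x)}$ is a bijection of $Y$, so the second coordinate sweeps out all of $C_\I(X,Y)$. Continuity of $H$ and of its inverse in the $\I$-open topology follows from joint continuity of $h$ and $h^{-1}$ together with the standard fact that composition with, and coordinatewise application of, a continuous map on the target is continuous for the $\I$-open topology (a routine check on subbasic sets $[K;U]$). In particular, taking $g$ to be any continuous function and using $H$ one obtains homeomorphisms of $C_\I(X,Y)$ moving one function to another, establishing topological homogeneity.

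The main obstacle I anticipate is verifying that the rectification map $H$ (and its inverse) is continuous in the $\I$-open topology rather than merely pointwise; this requires that the operation $f\mapsto (x\mapsto h_{g(x)}(f(x)))$ send compact sets to compact sets compatibly and pull back subbasic sets $[K;U]$ to open sets, which in turn hinges on the continuity of $h$ being uniform enough on the compact set $K\subset X$ after composing with $g$. This is where the compactness of the members $K$ of $\I$ and the Hausdorffness of $Y$ enter: on $K$ the function $g$ has compact image, on which $h$ restricts to a map that can be controlled by finitely many basic open sets, giving the required openness. A secondary subtlety is that one must record explicitly that Corollary~\ref{c:homoPyt} (even with the typo corrected) transfers the Pytkeev$^*$ property from the single point $\bar y$ to \emph{all} points, which is precisely what homogeneity buys; combined with the locality of the class this finishes the argument. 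All remaining verifications are the routine ``subbasic-open-set'' computations for the $\I$-open topology and I would not grind through them in detail.
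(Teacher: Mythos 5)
Your proposal follows exactly the paper's route: the paper's proof is two lines, citing Corollary~4.4 of \cite{BL} for the rectifiability (hence topological homogeneity) of $C_\I(X,Y)$ and then invoking Corollary~\ref{c:homoPyt}. The only difference is that you sketch the rectification $H(g,f)=(g,\;x\mapsto h_{g(x)}(f(x)))$ of the function space directly instead of citing it, and you correctly flag the typo in the conclusion of Corollary~\ref{c:homoPyt} (``strong Pytkeev property'' should read ``strong Pytkeev$^*$ property''); both points are sound.
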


\begin{proof} By Corollary 4.4 \cite{BL}, the function space $C_\I(X,Y)$ is rectifiable and hence is topologically homogeneous. By Corollary~\ref{c:homoPyt}, the space $C_\I(X,Y)$ has the strong Pytkeev$^*$ property.
\end{proof}

Now we shall apply Theorem~\ref{t:function} to prove that the class of topological spaces with the strong Pytkeev$^*$ property is closed under countable Tychonoff products and countable small box-products of pointed spaces.

By a {\em pointed space} we understand a topological space $X$ with a distinguished point, which will be denoted by $*_X$.

By the {\em Tychonoff product} of pointed topological spaces $(X_\alpha,*_{X_\alpha})$ we understand the Tychonoff product $\prod_{\alpha\in A}X_\alpha$ with the distinguished point $(*_{X_\alpha})_{\alpha\in A}$. The {\em box-product} $\square_{\alpha\in A}X_\alpha$ of the spaces $X_\alpha$, $\alpha\in A$, is their Cartesian product $\prod_{\alpha\in A}X_\alpha$ endowed with the box-topology generated by the products $\prod_{\alpha\in A}U_\alpha$ of open sets $U_\alpha\subset X_\alpha$, $\alpha\in A$.

The subset $$\cbox_{\alpha\in A}X_\alpha=\big\{(x_\alpha)_{\alpha\in A}\in\square_{\alpha\in A}X_\alpha:\{\alpha\in A:x_\alpha\ne *_{X_\alpha}\}\mbox{ is finite}\big\}$$of the box-product $\square_{\alpha\in A}X_\alpha$ is called the {\em small box-product} of the pointed topological spaces $X_\alpha$, $\alpha\in A$. It is a pointed topological space with distinguished point $(*_{X_\alpha})_{\alpha\in A}$.

The subspace
$$\coprod_{\alpha\in A}X_\alpha =\big\{(x_\alpha)_{\alpha\in A}\in\prod_{\alpha\in A}X_\alpha:\big|\{\alpha\in A:x_\alpha\ne *_{X_\alpha}\}\big|\le1\big\}$$
of the Tychonoff product $\coprod_{\alpha\in A}X_n$ is called the {\em Tychonoff bouquet} of the pointed spaces $X_\alpha$, $\alpha\in A$. The same set $\coprod_{\alpha\in A}X_\alpha$ endowed with the box topology inherited from $\square_{\alpha\in A}X_\alpha$ will be called the {\em box-bouquet} of the pointed topological spaces $X_\alpha$, $\alpha\in A$, and will be denoted by $\bigvee_{\alpha\in A}X_\alpha$.

\begin{theorem} If $X_n$, $n\in\w$, are pointed topological spaces with the strong   Pytkeev$^*$ properties at their distinguished points, then
the Tychonoff bouquet $\coprod_{n\in\w}X_n$ and the Tychonoff product $\prod_{n\in\w}X_n$ both have the strong Pytkeev$^*$ property at their distinguished point $(*_{X_n})_{n\in\w}$.
\end{theorem}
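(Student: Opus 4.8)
The plan is to deduce the theorem from Theorem~\ref{t:function} by exhibiting $\prod_{n\in\w}X_n$ as a subspace of a function space $C_\I(D,Y)$ which carries the strong Pytkeev$^*$ property at the constant function corresponding to $(*_{X_n})_{n\in\w}$. Since Theorem~\ref{t:function} produces the property only at a \emph{constant} function, the distinguished points of the factors must first be amalgamated into one point, which dictates the choice $Y=\bigvee_{n\in\w}X_n$ (the box-bouquet). Thus the substance of the argument is the auxiliary statement: \emph{the box-bouquet $Y=\bigvee_{n\in\w}X_n$ of pointed spaces $X_n$ with the strong Pytkeev$^*$ property at $*_{X_n}$ has the strong Pytkeev$^*$ property at its distinguished point $*_Y$.} I expect this auxiliary statement to be the main obstacle; everything else is routine.

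To prove it I would identify each $X_n$ with its canonical copy in $Y$, so that $Y=\bigcup_nX_n$, $X_n\cap X_m=\{*_Y\}$ for $n\ne m$, each inclusion $X_n\hookrightarrow Y$ is a topological embedding, and a neighborhood base at $*_Y$ consists of the ``wedges'' $\bigcup_nV_n$, where each $V_n$ is an arbitrary neighborhood of $*_{X_n}$ in $X_n$. Fixing for each $n$ a countable Pytkeev$^*$ network $\mathcal P_n$ at $*_{X_n}$ with $*_{X_n}\in P$ for all $P\in\mathcal P_n$, I would verify that the countable family $\mathcal Q=\{\ddot{*}_Y\}\cup\bigcup_n\mathcal P_n$ (each $P\in\mathcal P_n$ read inside $X_n\subset Y$, and $\ddot{*}_Y$ the intersection of all neighborhoods of $*_Y$) is a Pytkeev$^*$ network at $*_Y$. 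So fix a neighborhood $O$ of $*_Y$ and a sequence $(z_k)_{k\in\w}$ in $Y$ accumulating at $*_Y$. If $\ddot{*}_Y$ contains infinitely many $z_k$, it is the required set; otherwise discard those finitely many terms and, for each remaining $k$, record the unique $m(k)$ with $z_k\in X_{m(k)}\setminus\ddot{*}_{X_{m(k)}}$ and put $I_m=\{k:m(k)=m\}$.

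The heart is the claim that \emph{some $I_{m^*}$ is infinite and the subsequence $(z_k)_{k\in I_{m^*}}$ accumulates at $*_{X_{m^*}}$ in $X_{m^*}$.} I would prove it by contradiction: if $I_m$ is infinite but $(z_k)_{k\in I_m}$ does not accumulate at $*_{X_m}$, then some neighborhood of $*_{X_m}$ meets $\{z_k:k\in I_m\}$ in a finite set, and shrinking it to avoid those finitely many points (each lying outside $\ddot{*}_{X_m}$) yields a neighborhood $W_m$ of $*_{X_m}$ disjoint from $\{z_k:k\in I_m\}$; if $I_m$ is finite, the finite set $\{z_k:k\in I_m\}\subset X_m\setminus\ddot{*}_{X_m}$ has $*_{X_m}$ outside its closure, so again there is such a $W_m$. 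Then $\bigcup_mW_m$ is a neighborhood of $*_Y$ disjoint from all the $z_k$, contradicting accumulation. (Working with $\ddot{*}_{X_m}$ in place of $\{*_{X_m}\}$ avoids any separation hypothesis on the $X_n$.) Granting the claim, applying the Pytkeev$^*$ network $\mathcal P_{m^*}$ to the neighborhood $O\cap X_{m^*}$ of $*_{X_{m^*}}$ and to $(z_k)_{k\in I_{m^*}}$ yields $P\in\mathcal P_{m^*}$ with $*_{X_{m^*}}\in P\subset O\cap X_{m^*}$ containing infinitely many $z_k$; inside $Y$ this $P$ satisfies $*_Y\in P\subset O$ (its trace on $X_m$ is $\{*_Y\}$ for $m\ne m^*$), so $P\in\mathcal Q$ is the set demanded by Definition~\ref{d:Pytkeev*} at $*_Y$.

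With the auxiliary statement in hand I would finish as follows. Let $D=\w$ carry the discrete topology; it is a Hausdorff space with a countable $k$-network, and the ideal $\I$ of all finite subsets of $D$ is discretely-complete because every compact subset of $D$ is finite. Since the $\I$-open topology is the topology of pointwise convergence, $C_\I(D,Y)=Y^{\w}$ with the product topology. By the auxiliary statement $Y=\bigvee_nX_n$ has the strong Pytkeev$^*$ property at $*_Y$, so Theorem~\ref{t:function} gives that $Y^{\w}=C_\I(D,Y)$ has it at the constant function $\bar{*}_Y$. Composing each projection with the embedding $X_n\hookrightarrow Y$ realizes $\prod_{n\in\w}X_n$ as a subspace of $Y^{\w}$ whose subspace topology is the product topology (the $X_n$ are embedded, and in a basic open box all but finitely many factors are the whole space), with $(*_{X_n})_{n\in\w}\mapsto\bar{*}_Y$. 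Since the strong Pytkeev$^*$ property at a point is inherited by subspaces containing that point (restrict the countable network to the subspace), $\prod_{n\in\w}X_n$ has it at $(*_{X_n})_{n\in\w}$; and $\coprod_{n\in\w}X_n$, being by definition a subspace of $\prod_{n\in\w}X_n$ containing $(*_{X_n})_{n\in\w}$, has it there as well.
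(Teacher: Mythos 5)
Your proof is correct, and its overall architecture coincides with the paper's: reduce the Tychonoff product to a function space $C(\w,\text{bouquet})$ over the discrete space $\w$ via Theorem~\ref{t:function}, after first amalgamating the distinguished points into a single bouquet point. The differences are worth recording. The paper routes through the \emph{Tychonoff} bouquet $\coprod_n X_n$, for which it merely asserts that $\bigl(\bigcup_n\mathcal N_n\bigr)\cup\bigl\{\bigcup_{k\ge n}X_k:n\in\w\bigr\}$ is a Pytkeev$^*$ network at the wedge point (the tail sets $\bigcup_{k\ge n}X_k$ compensating for the fact that Tychonoff neighborhoods restrict only finitely many coordinates), and then invokes Corollary~\ref{c:main}; the bouquet case of the theorem then comes for free from that explicit network rather than as a subspace of the product. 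You instead route through the \emph{box}-bouquet, whose neighborhoods of the wedge point are arbitrary wedges, and you supply a complete pigeonhole argument (some $I_{m^*}$ infinite with the subsequence accumulating in $X_{m^*}$, else the wedge of the avoiding neighborhoods $W_m$ kills accumulation) that the paper leaves implicit both here and in the subsequent small-box-product Proposition. Two genuine bonuses of your version: by working with $\ddot{*}_{X_m}$ and adjoining $\ddot{*}_Y$ to the network you avoid the $T_1$ hypothesis that the paper imposes in its box-bouquet Proposition, so your auxiliary lemma strengthens that result; and your closing observation that the Tychonoff bouquet is by definition a subspace of the product lets you dispense with the tail-set network entirely. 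Your choice of the ideal of finite subsets of the discrete space $\w$ (pointwise topology) rather than the compact-open topology is immaterial, since the two coincide there, and your verification that the subspace topology on $\prod_n X_n\subset Y^\w$ is the product topology is sound.
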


\begin{proof} For every $n\in\w$, fix a countable Pytkeev$^*$ network $\mathcal N_n$ at the distinguished point $*_{X_n}$ of the space $X_n$. Identify each space $X_n$, $n\in\w$, with the subspace
 $\big\{(x_k)_{k\in\w}\in\coprod_{k\in\w}X_k:\forall k\ne n\;\; (x_k=*_{X_k})\big\}$ of the Tychonoff bouquet $X=\coprod_{k\in\w}X_k$  and observe that the family $$\mathcal N=\big(\bigcup_{n\in\w}\mathcal N_k\big)\cup \big\{\textstyle{\bigcup_{k\ge n}X_k:n\in\w}\big\}$$ is a countable Pytkeev$^*$ network at the distinguished point $*_X$ of $X$, which means that the space $X$ has the strong Pytkeev$^*$ property at $*_X$.

Observe that the Tychonoff product $\prod_{n\in\w}X_n$ can be identified with the (pointed) subspace $\{f\in C_k(\w,X):\forall n\in\w\;\;f(n)\in X_n\}$ of the function space $C_k(\w,X)$ whose distinguished point is the constant function $f_*:\w\to\{*_X\}\subset X$. By Corollary~\ref{c:main}, the space $C_k(\w,X)$ has the strong Pytkeev$^*$ property at its distinguished point $f_*$. Then the pointed subspace $\prod_{n\in\w}X_n$ of  $C_k(\w,X)$ has the strong Pytkeev$^*$ property at its distinguished point too.
\end{proof}

In the same way we can prove the preservation of the strong Pytkeev$^*$ property by small box-products.

\begin{proposition} If $X_n$, $n\in\w$, are pointed topological $T_1$-spaces with the strong Pytkeev$^*$ property at their distinguished points, then
the box-bouquet $\bigvee_{n\in\w}X_n$ and the small box-product $\cbox_{n\in\w}X_n$ both have the strong Pytkeev$^*$ property at their distinguished points.
\end{proposition}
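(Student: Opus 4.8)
The plan is to reduce the statement to the small box-product and there build a countable Pytkeev$^*$ network out of the networks of the finite sub-products; the only genuinely new ingredient will be a ``bounded support'' lemma resting on $T_1$-separation, and this is where the work lies.

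First, the box-bouquet $\bigvee_{n\in\w}X_n$ carries the topology induced from $\square_{n\in\w}X_n$ and so is a pointed subspace of the small box-product $X:=\cbox_{n\in\w}X_n$; since the strong Pytkeev$^*$ property at a point passes to subspaces (restrict a witnessing network), it is enough to show that $X$ has the strong Pytkeev$^*$ property at $*:=(*_{X_n})_{n\in\w}$. For $m\in\w$ put $Y_m=\{(x_n)_{n\in\w}\in X:x_n=*_{X_n}\text{ for all }n\ge m\}$; the projection onto the first $m$ coordinates is a homeomorphism of $Y_m$ onto the Tychonoff product $\prod_{i<m}X_i$ of the pointed spaces $X_0,\dots,X_{m-1}$, because a nonempty basic open subset of $Y_m$ imposes no condition on the coordinates $\ge m$. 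By the preceding theorem, applied with the factors $X_0,\dots,X_{m-1}$ and all further factors one-point, $Y_m$ has a countable Pytkeev$^*$ network $\mathcal M_m$ at $*$ (with $Y_0=\{*\}$ and $\mathcal M_0=\{\{*\}\}$). I claim that the countable family $\mathcal N:=\bigcup_{m\in\w}\mathcal M_m$, viewed as a family of subsets of $X$, is a Pytkeev$^*$ network at $*$.

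Indeed, fix a neighbourhood $O_*\subset X$ of $*$ and a sequence $(a^{(k)})_{k\in\w}$ in $X$ accumulating at $*$. The crux is the lemma, proved below: \emph{there exist $m\in\w$ and an infinite $J\subset\w$ with $a^{(k)}\in Y_m$ for all $k\in J$ and such that $(a^{(k)})_{k\in J}$ still accumulates at $*$.} Granting it, $(a^{(k)})_{k\in J}$ accumulates at $*$ in the subspace $Y_m$ and $O_*\cap Y_m$ is a neighbourhood of $*$ there, so $\mathcal M_m$ yields $N\in\mathcal M_m$ with $*\in N\subset O_*\cap Y_m\subset O_*$ and $a^{(k)}\in N$ for infinitely many $k\in J$. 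As $N\in\mathcal N$ and $N$ contains infinitely many of the points $a^{(k)}$, we are done, so $X$ (hence also the box-bouquet) has the strong Pytkeev$^*$ property at $*$.

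It remains to prove the lemma, which is the main obstacle. If $a^{(k)}=*$ for infinitely many $k$, take $J=\{k:a^{(k)}=*\}$ and $m=0$. Otherwise, after discarding finitely many terms and using that $X$ is $T_1$ (a box-product of $T_1$-spaces is $T_1$), we may assume the $a^{(k)}$ pairwise distinct and distinct from $*$, so that $A=\{a^{(k)}:k\in\w\}$ accumulates at $*$. Step 1 is to see that some $A_m:=A\cap Y_m$ is infinite: otherwise each set $B_n=\{x_n:x\in A,\ \max\mathrm{supp}(x)=n\}$ (where $\mathrm{supp}(x)=\{j:x_j\ne *_{X_j}\}$) is finite and omits $*_{X_n}$, so $X_n\setminus B_n$ is an open neighbourhood of $*_{X_n}$ — this is exactly where $T_1$ is used — and $\cbox_{n\in\w}(X_n\setminus B_n)$ is a neighbourhood of $*$ disjoint from $A$, contradicting accumulation. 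Step 2 is to upgrade ``$A_m$ infinite'' to ``$A_m$ accumulates at $*$'' (after which we take $J=\{k:a^{(k)}\in Y_m\}$). Suppose no $A_m$ accumulates at $*$ and pick, for each $m$, a basic neighbourhood $V^m$ of $*$ depending only on the first $m$ coordinates with $V^m\cap A_m$ finite. Repeatedly intersecting with such neighbourhoods and throwing away the part with small support peels off subsets $A\supseteq A'\supseteq A''\supseteq\cdots$, each still accumulating at $*$, on which the smallest value of $\max\mathrm{supp}$ strictly increases; then a final fusion of the $V^m$'s — deleting at each coordinate the finitely many ``exceptional'' values that $T_1$-separation allows — produces one neighbourhood of $*$ that meets $A$ in finitely many points, the required contradiction. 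The hard part here is this fusion: in the box topology a neighbourhood of $*$ constrains \emph{every} coordinate, so the countably many $V^m$ cannot simply be intersected (that would force an infinite intersection at a fixed coordinate), and one has to exploit the finiteness of the supports of the $a^{(k)}$ to arrange that each coordinate is constrained only finitely often.
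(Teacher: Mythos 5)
Your reduction of the box-bouquet to the small box-product is fine, and each $Y_m$ is indeed homeomorphic to $\prod_{i<m}X_i$ and so has a countable Pytkeev$^*$ network $\mathcal M_m$ at $*$. But the ``bounded support'' lemma on which everything rests is false, and with it the claim that $\bigcup_{m\in\w}\mathcal M_m$ is a Pytkeev$^*$ network at $*$. Take each $X_n=\{*_n\}\cup\{1/k:k\ge1\}$ to be a convergent sequence, and in $X=\cbox_{n\in\w}X_n$ consider $A=\{a^{(m,k)}:m,k\ge1\}$ where $a^{(m,k)}$ has $a^{(m,k)}_0=1/m$, $a^{(m,k)}_m=1/k$ and all other coordinates equal to $*$. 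Any basic neighborhood $\prod_nU_n$ of $*$ has $1/m\in U_0$ for all large $m$ and, for each such $m$, $1/k\in U_m$ for all large $k$; hence $A$ accumulates at $*$. On the other hand $A\cap Y_m$ consists of points whose $0$-th coordinate lies in the finite set $\{1,1/2,\dots,1/(m-1)\}$, so the neighborhood $(X_0\setminus\{1,\dots,1/(m-1)\})\times\prod_{n\ge1}X_n$ of $*$ misses $A\cap Y_m$ entirely: no infinite subset of any $A\cap Y_m$ accumulates at $*$. This kills your Step 2 (no ``fusion'' of the $V^m$ can exist, since every neighborhood of $*$ meets $A$ infinitely), and it is not a repairable technicality: choosing for $\mathcal M_m$ the neighborhood base $\{\prod_{i<m}(\{*_i\}\cup\{1/k:k\ge j\}):j\ge m\}$ of $*$ in $Y_m$ (a legitimate countable Pytkeev$^*$ network there), one checks that no member of $\bigcup_m\mathcal M_m$ contains even one point of $A$, so the union is not a Pytkeev$^*$ network at $*$.

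The underlying obstruction is that $A\cap Y_{m+1}$ accumulates at the point $(1/m,*,*,\dots)\ne *$ rather than at $*$, so the networks $\mathcal M_m$, chosen independently at the single point $*$, carry no obligation to catch it, while $A$ itself accumulates at $*$ only ``diagonally'' across infinitely many $Y_m$. The paper avoids this by embedding $\cbox_{n\in\w}X_n$ into $C_k(\w+1,\bigvee_{n\in\w}X_n)$ and invoking Corollary~\ref{c:main}: the resulting network members $[K;P]$ with $K$ an infinite compact subset of $\w+1$ constrain all sufficiently large coordinates simultaneously, which is exactly what is needed to capture such sets. If you want a direct construction, the family must be built coherently across the coordinates (e.g.\ from the factor networks $\mathcal N_n$ together with tail conditions), not assembled from arbitrary networks of the finite subproducts.
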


\begin{proof} For every $n\in\w$ fix a countable Pytkeev$^*$ network $\mathcal N_n$ at the distinguished point $*_{X_n}$ of the space $X_n$. Identify each space $X_n$ with the subspace
 $\big\{(x_k)_{k\in\w}\in\textstyle{\bigvee_{k\in\w}}X_k:\forall k\ne n\;\; (x_k=*_{X_k})\big\}$ of the box-bouquet $X=\bigvee_{k\in\w}X_k$.
It is easy to check that the countable family $$\mathcal N=\bigcup_{n\in\w}\mathcal N_k$$ is a Pytkeev$^*$ network at the distinguished point $*_X$ of $X$, which means that $X$ has the strong Pytkeev$^*$ property at $*_X$.

Let $\w+1=\w\cup\{\infty\}$ be the one-point compactification of the (discrete) space $\w$ of non-negative integers. Observe that the small box-product $\cbox_{n\in\w}X_n$ is homeomorphic to the (pointed) subspace
$$\{f\in C_k(\w+1,X):f(\infty)=*_X\mbox{ and $\forall n\in\w$ $f(n)\in X_n$}\}$$of the function space $C_k(\w+1,X)$.
By Corollary~\ref{c:main}, the function space $C_k(\w+1,X)$ has the strong Pytkeev$^*$  property at the constant function $f_*:\w\to\{*_X\}$, which implies that the subspace $\cbox_{n\in\w}X_n$ of $C_k(\w+1,X)$ has the strong Pytkeev$^*$ property at its distinguished point $(*_{X_n})_{n\in\w}$.
\end{proof}

\end{document}